\RequirePackage{lineno}
\documentclass[12pt,a4paper,leqno]{amsart}
\newcounter{minutes}\setcounter{minutes}{\time}
\divide\time by 60
\newcounter{hours}\setcounter{hours}{\time}
\multiply\time by 60
\addtocounter{minutes}{-\time}
\usepackage{amssymb}
\usepackage{float}
\usepackage{here}
\usepackage{graphicx}

\date{}
\newfont{\cyrilic}{wncyr10 scaled 1000}
\newpage
\title{Norm inequalities for vector functions}
\author { B. A. Bhayo, V. Bo\v zin, D. Kalaj, M. Vuorinen}

\newcommand{\comment}[1]{}

\swapnumbers
\theoremstyle{plain}

\newtheorem{theorem}[equation]{Theorem}
\newtheorem{lemma}[equation]{Lemma}
\newtheorem{definition}[equation]{Definition}

\newtheorem{corollary}[equation]{Corollary}

\newtheorem{remark}[equation]{Remark}

\newtheorem{subsec}[equation]{}

\numberwithin{equation}{section}

\pagestyle{headings}
\setcounter{page}{1}
\addtolength{\hoffset}{-1.15cm}
\addtolength{\textwidth}{2.3cm}
\addtolength{\voffset}{0.45cm}
\addtolength{\textheight}{-0.9cm}

\begin{document}
\font\fFt=eusm10 
\font\fFa=eusm7  
\font\fFp=eusm5  
\def\K{\mathchoice
{\hbox{\,\fFt K}}
{\hbox{\,\fFt K}}
{\hbox{\,\fFa K}}
{\hbox{\,\fFp K}}}
\maketitle
\begin{center}
{\tiny \texttt{File:~\jobname .tex,
          printed: \number\year-\number\month-\number\day,
          \thehours.\ifnum\theminutes<10{0}\fi\theminutes} }
\end{center}

{\bf Abstract.}
We study vector functions of ${\mathbb R}^n$ into itself, which are of the form
$x \mapsto g(|x|)x\,,$ where $g : (0,\infty) \to (0,\infty) $ is a
continuous function and call these radial functions. In the case when
$g(t) = t^c$ for some $c \in {\mathbb R}\,,$ we find
upper bounds for the distance of image points under such a radial function. Some
of our results refine recent results of L. Maligranda and S. Dragomir.
In particular, we study quasiconformal
mappings of this simple type and obtain norm inequalities for such mappings.

{\bf Mathematics Subject Classification (2000)}: 30C65, 26D15

{\bf Keywords and phrases:} Quasiconformal map, normed linear space
\vspace{.5cm}
\section{Introduction}

In 2006 L. Maligranda \cite{m} studied the following function
\begin{equation} \label{defpangdist}
\alpha_p(x,y) = ||x|^{p-1} x - |y|^{p-1}y|\,,  p \in {\mathbb R} \, ,
\end{equation}
for $x,y \in  {\mathbb R}^n \setminus \{ 0 \}\,,$ termed the $p-$angular
distance between $x$ and $y\,.$ It is clear that $\alpha_p$ satisfies the
triangle inequality and thus it defines a metric.
Note that  $\alpha_0(x,y)$ equals $2 \sin(\omega/2)$
where $\omega \in [0,\pi]$ is the angle between the segments $[0,x]$ and  $[0,y]\,.$
He proved in \cite[Theorem 2]{m} the following theorem in the context of normed spaces.

\begin{theorem} \label{Mthm}
$$\alpha_p(x,y)\leq \left\{\begin{array}{lll} (2-p)\displaystyle\frac{|x-y| \max\{|x|^{p},|y|^{p}\}}
                      {(\max\{|x|,|y|\})} \quad
                      {\rm if}\, \, p\in (-\infty,0) \, \, {\rm and}\, x,y\neq 0;
                    \\   \\(2-p)\displaystyle\frac{|x-y|}
                      {(\max\{|x|,|y|\})^{1-p}} \quad
                      {\rm if}\,\, p\in[0,1] \, \, {\rm and}\,\, x,y\neq 0;\\
                     \\
                       p \,(\max\{|x|,|y|\})^{p-1}|x-y| \quad {\rm if}
                       \,p \, \in(1,\infty)
                     .\end{array}\right.$$
\end{theorem}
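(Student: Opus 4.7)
By the symmetry $\alpha_p(x,y) = \alpha_p(y,x)$, I assume throughout that $|x| \ge |y| > 0$ and set $M := |x|$, $u := |y|/|x| \in (0,1]$. The three cases in the theorem are separated by the sign of $p-1$. I would treat $p > 1$ by a mean-value / integral argument and both $p \in [0,1]$ and $p < 0$ by one and the same algebraic splitting combined with the two complementary forms of Bernoulli's inequality.

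\textbf{The case $p > 1$.} Consider the radial field $F(z) = |z|^{p-1}z$ on $\mathbb{R}^n \setminus \{0\}$. A direct computation yields
\[DF(z)\,h \;=\; |z|^{p-1}h + (p-1)|z|^{p-3}\langle z, h\rangle z,\]
whose singular values are $p|z|^{p-1}$ in the radial direction and $|z|^{p-1}$ in each tangential direction; hence $\|DF(z)\|_{\mathrm{op}} = p|z|^{p-1}$ for $p>1$. Writing $F(x)-F(y) = \int_0^1 DF(z(t))(x-y)\,dt$ along the segment $z(t) = (1-t)y+tx$, and using the convexity bound $|z(t)| \le M$ together with $p-1 \ge 0$, gives $\alpha_p(x,y) \le p\,M^{p-1}|x-y|$.

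\textbf{The cases $p \le 1$.} The integral route fails because $|z(t)|^{p-1}$ blows up when $y$ is near $-x$. Instead I would use the algebraic identity
\[|x|^{p-1}x - |y|^{p-1}y \;=\; |x|^{p-1}(x-y) + \bigl(|x|^{p-1}-|y|^{p-1}\bigr)y,\]
which, after the triangle inequality and the observation that $|x|^{p-1} \le |y|^{p-1}$ (since $p-1\le 0$ and $|x|\ge |y|$), produces
\[\alpha_p(x,y) \;\le\; M^{p-1}|x-y| + |y|\bigl(|y|^{p-1} - M^{p-1}\bigr).\]
Combining this with $||x|-|y|| \le |x-y|$ reduces the claim to a one-variable inequality in $u$. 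For $p \in [0,1]$, dividing through by $M^p$ converts the desired inequality to $u^p \le 1 - p + pu$, which is Bernoulli's inequality with exponent $p\in[0,1]$ (equivalently, concavity of $t \mapsto t^p$). For $p < 0$, the right-hand side of the theorem involves $\max\{|x|^p,|y|^p\} = |y|^p$, and rescaling instead by $|y|^p/M$ transforms the target into $u^{1-p} \ge 1 - (1-p)(1-u)$, which is Bernoulli's inequality for the exponent $q := 1-p > 1$.

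\textbf{Main obstacle.} The one subtle point is recognizing that the mean-value estimate is not sharp for $p \le 1$ because of the radial singularity at the origin, and that both subcases $p \in [0,1]$ and $p < 0$ succumb to the same splitting followed by complementary forms of Bernoulli's inequality. Beyond careful bookkeeping of signs — in particular, deciding which of $|x|^{p-1}, |y|^{p-1}$ is larger and which of $|x|^p, |y|^p$ attains the maximum — the computations are routine.
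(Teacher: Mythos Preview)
The paper does not give its own proof of this statement: Theorem~\ref{Mthm} is quoted verbatim from Maligranda \cite[Theorem~2]{m} as background, so there is no in-paper argument to compare against. What can be said is that your approach is essentially Maligranda's original one --- the same algebraic splitting
\[
|x|^{p-1}x-|y|^{p-1}y=|x|^{p-1}(x-y)+\bigl(|x|^{p-1}-|y|^{p-1}\bigr)y
\]
followed by the reverse triangle inequality and a one-variable estimate --- so your sketch would reconstruct the cited proof rather than offer an alternative.

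On correctness: the cases $p>1$ and $p\in[0,1]$ are clean and go through exactly as you describe. For $p<0$ your plan is sound but the bookkeeping is slightly understated. After the splitting and the substitution $u=|y|/M$, $d=|x-y|\ge M(1-u)$, the inequality one actually needs is
\[
(1-p)-(2-p)u+2u^{1-p}-u^{-p}\ \ge\ 0,
\]
which is \emph{not literally} the Bernoulli inequality $u^{1-p}\ge p+(1-p)u$ but rather that inequality plus the trivially nonnegative term $(1-u)(1-u^{-p})$ (note $u\le 1$ and $-p>0$ force $u^{-p}\le 1$). So your claimed reduction to ``Bernoulli with exponent $q=1-p>1$'' is correct once you record this extra nonnegative piece; as written, the sentence ``rescaling \dots\ transforms the target into $u^{1-p}\ge 1-(1-p)(1-u)$'' skips that small step. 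With that caveat the argument is complete.
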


Soon thereafter, in 2009, S. Dragomir \cite[Theorem 1]{d} refined this result
and gave the following upper bound for the $p$-angular distance for nonzero
vectors $x,y\,.$

\begin{theorem} \label{Dthm}
$$\alpha_p(x,y)\leq \left\{\begin{array}{llllll} |x-y|(\max\{|x|,|y|\})^{p-1}+
                 \left||x|^{p-1}-|y|^{p-1}\right|\min\{|x|,|y|\}\quad
                  {\rm if} \, \, p\in(1,\infty)\,;\\
                 \\
                  \displaystyle\frac{|x-y|}{(\min\{|x|,|y|\})^{1-p}}+
                 \left||x|^{1-p}-|y|^{1-p}\right|
                 \min\left\{\frac{|x|^p}{|y|^{1-p}},\frac{|y|^p}{|x|^{1-p}}\right\} \quad
                 {\rm if} \, \, p\in[0,1]\,; \\
    \\
     \displaystyle\frac{|x-y|}{(\min\{|x|,|y|\})^{1-p}} +
    \displaystyle\frac{||x|^{1-p}-|y|^{1-p}|}{(\max\{|x|^{-p}|y|^{1-p},|y|^{-p} |x|^{1-p}\})} \quad {\rm if} \, \, p \in (-\infty,0)\,.
                 \end{array}\right.$$

\end{theorem}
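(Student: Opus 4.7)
\smallskip
\noindent\textbf{Proof plan.} The natural starting point is the elementary splitting
\[
|x|^{p-1}x - |y|^{p-1}y \;=\; |y|^{p-1}(x-y) + (|x|^{p-1}-|y|^{p-1})x
\]
together with its companion obtained by interchanging the roles of $x$ and $y$. The triangle inequality immediately gives
\[
\alpha_p(x,y)\;\le\; |y|^{p-1}|x-y|+\bigl||x|^{p-1}-|y|^{p-1}\bigr|\,|x|.
\]
Since $\alpha_p(x,y)=\alpha_p(y,x)$, I may assume $|x|\le|y|$ throughout and recover the opposite case by relabeling.

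\smallskip

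For $p\in(1,\infty)$ the exponent $p-1$ is positive, so under $|x|\le|y|$ one has $|y|^{p-1}=(\max\{|x|,|y|\})^{p-1}$ and $|x|=\min\{|x|,|y|\}$. The displayed bound \emph{is} then the first line of the theorem, with no further work required.

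\smallskip

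For $p\in[0,1]$ the exponent $p-1$ is nonpositive, so I first weaken $|y|^{p-1}\le(\min\{|x|,|y|\})^{p-1}$ to convert the first term into $|x-y|/(\min\{|x|,|y|\})^{1-p}$. For the second term the key move is the identity
\[
|x|^{p-1}-|y|^{p-1}\;=\;\frac{|y|^{1-p}-|x|^{1-p}}{|x|^{1-p}|y|^{1-p}},
\]
which turns $\bigl||x|^{p-1}-|y|^{p-1}\bigr|\,|x|$ into $\bigl||x|^{1-p}-|y|^{1-p}\bigr|\cdot|x|^p/|y|^{1-p}$. Comparing the two candidate coefficients via $(|x|^p/|y|^{1-p})/(|y|^p/|x|^{1-p})=|x|/|y|\le 1$ shows that the coefficient produced is exactly the minimum appearing in the theorem. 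The case $p\in(-\infty,0)$ is handled the same way, after noting the cosmetic identity
\[
\min\!\left\{\frac{|x|^p}{|y|^{1-p}},\,\frac{|y|^p}{|x|^{1-p}}\right\}\;=\;\frac{1}{\max\{|x|^{-p}|y|^{1-p},\,|y|^{-p}|x|^{1-p}\}},
\]
which puts the second summand in the form displayed for negative $p$.

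\smallskip

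The main obstacle I anticipate is clerical: choosing the correct member of the pair of splittings so that the $(p-1)$-power of the \emph{larger} of $|x|,|y|$ sits on $|x-y|$ when $p>1$, and so that the second term inherits the \emph{smaller} of the two admissible coefficients when $p\le 1$. The refinement over Theorem~\ref{Mthm} emerges precisely because the second summand here is an exact remainder rather than being absorbed, as in Maligranda's argument, into the crude multiplicative constants $2-p$ or $p$.
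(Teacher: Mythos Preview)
The paper does not supply a proof of this statement: Theorem~\ref{Dthm} is quoted in the introduction as Dragomir's result \cite[Theorem~1]{d}, so there is no in-paper argument to compare against. Your proof is correct and is essentially the standard one (and, in fact, Dragomir's): the add--and--subtract decomposition
\[
|x|^{p-1}x-|y|^{p-1}y=|y|^{p-1}(x-y)+\bigl(|x|^{p-1}-|y|^{p-1}\bigr)x
\]
together with the triangle inequality yields the $p>1$ case directly, while for $p\le 1$ the identity $|x|^{p-1}-|y|^{p-1}=(|y|^{1-p}-|x|^{1-p})/(|x|^{1-p}|y|^{1-p})$ and the harmless weakening $|y|^{p-1}\le(\min\{|x|,|y|\})^{p-1}$ put the bound in the stated form. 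One small remark: for $p\le 1$ your first splitting actually produces the \emph{sharper} first term $|y|^{p-1}|x-y|$ before you relax it to $(\min\{|x|,|y|\})^{p-1}|x-y|$; the relaxation is needed only to match the theorem's wording, not for validity.
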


Generalizations for operators were discussed very recently in \cite{dfm}.
For general information about norm inequalities see \cite[Chapter XVIII]{mpf}.

Studying sharp constants connected to the $p$-Laplace operator J. Bystr\"om
\cite[Lemma 3.3]{by} proved in 2005 the following result.

\begin{theorem}\label{Bthm} For $p\in(0,1)$
and $x,y\in \mathbb{R}^n$, we have
$$\alpha_p(x,y) \leq 2^{1-p}|x-y|^{p}$$
with equality for $x=-y\,.$
\end{theorem}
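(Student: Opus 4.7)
I plan to reduce the vector inequality to a scalar inequality in two variables by working inside the $2$-plane spanned by $x$ and $y$, and then to collapse it further by a concavity-in-the-angle argument. Each of the two resulting endpoint inequalities will be a one-line consequence of concavity or subadditivity of $s\mapsto s^p$.

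After handling the trivial cases ($x = 0$, $y = 0$, or $x = y$), set $r = \max(|x|,|y|)$, $t = \min(|x|,|y|)/r \in (0,1]$, and $u = \cos\theta$, where $\theta \in [0,\pi]$ is the angle between $x$ and $y$. The law of cosines and the analogous expansion of $\bigl|\,|x|^{p-1}x - |y|^{p-1}y\bigr|^2$ give
$$|x-y|^2 = r^2(1 + t^2 - 2tu), \qquad \alpha_p(x,y)^2 = r^{2p}\bigl(1 + t^{2p} - 2 t^p u\bigr),$$
so the theorem is equivalent to the scalar inequality
$$\varphi(u) := 1 + t^{2p} - 2t^p u \;\leq\; 2^{2(1-p)}(1 + t^2 - 2tu)^p =: \psi(u), \quad u \in [-1,1].$$

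The decisive observation is that $\varphi$ is affine in $u$, while $\psi$ is concave in $u$, being the composition of the concave function $\tau\mapsto 2^{2(1-p)}\tau^p$ (valid since $p\in(0,1)$) with an affine function of $u$. Hence $\psi - \varphi$ is concave on $[-1,1]$, its minimum occurs at an endpoint, and it suffices to verify the inequality at $u = \pm 1$.

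At $u = -1$, the inequality squares to $(1+t^p)^2 \leq 2^{2(1-p)}(1+t)^{2p}$, which rearranges to $(1+t^p)/2 \leq \bigl((1+t)/2\bigr)^p$, just midpoint concavity of $s\mapsto s^p$ applied to the pair $\{1,t\}$. At $u = +1$, since both sides of the squared form are non-negative, it suffices to show $1 - t^p \leq (1-t)^p$, which rearranges to the subadditivity $1 = (t + (1-t))^p \leq t^p + (1-t)^p$. Tracking equality conditions, the $u = +1$ bound is strict whenever $t \in (0,1)$, while the $u = -1$ bound is tight only for $t = 1$; hence equality in the original inequality occurs precisely when $|x|=|y|$ and $\theta = \pi$, i.e.\ $y = -x$, as claimed. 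I do not foresee a substantive obstacle; the one small point to check is that $1 + t^2 - 2tu > 0$ throughout $(0,1]\times[-1,1]$ except at $(t,u)=(1,1)$, where both sides of the original inequality vanish and the statement is trivially true.
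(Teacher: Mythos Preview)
Your argument is correct. Note, however, that the paper does not itself prove this theorem: it is quoted as a known result of Bystr\"om \cite[Lemma~3.3]{by} and then used as a tool (e.g.\ in the proof of \eqref{2i} and of \eqref{2j}), so there is no in-paper proof to compare your proposal against.

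As a self-contained proof your plan works without gaps. The reduction via the law of cosines to the two scalar variables $(t,u)$ is legitimate because both $|x-y|$ and $\alpha_p(x,y)$ depend only on $|x|,|y|$ and the angle between $x$ and $y$; the concavity claim for $\psi$ is valid since $\tau\mapsto\tau^p$ is concave on $[0,\infty)$ and you are composing it with an affine function; and the two endpoint checks are exactly the midpoint-concavity and subadditivity of $s\mapsto s^p$ that you identify. Your handling of the degenerate point $(t,u)=(1,1)$ and of the equality case $y=-x$ is also correct.
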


In this paper we study a two exponent variant of the function $x \mapsto |x|^{p-1}x$
defined for $a, b >0, x \in \mathbb{R}^n\,,$
\begin{equation} \label{myfdef}
{\mathcal A}_{a,b}(x)=\left\{\begin{array}{ll}|x|^{a-1}x\quad if\; |x|<1\\
                   |x|^{b-1}x\quad if\; |x|\ge 1.\end{array}\right.
\end{equation}
This function, like its one exponent version (the special case $a=b$),
defines a quasiconformal mapping and it has been used
in many examples to illuminate various properties of these maps \cite[p.49]{v}. For instance,
if $a \in (0,1)$ the function $  {\mathcal A}_{a,b}$ is H\"older-continuous at the origin.

We prove that the change of distance under this function is maximal in the radial
direction, up to a constant, in the sense of the next theorem
(observe that the points $x$ and $z$ are on the same ray). Note that the result is sharp
for $a \to 1\,.$ This result is natural to expect, but the proof is somewhat
involved. For brevity we write ${\mathcal A}={\mathcal A}_{a,b}$ if $ 0 < a \le 1 \le b\,.$

\begin{theorem} \label{kal}
Let $0<a\le 1\le b$ and
$$C(a,b) = \sup_{|x|\le|y|}Q(x,y),$$ where
$$Q(x,y)=\frac{|{\mathcal A}(x)-{\mathcal A}(y)|}
{|{\mathcal A}(x)-{\mathcal A}(z)|}\,,\quad  x,y \in {\mathbb R}^n \setminus \{ 0 \}\,
\text{ with }\; x\neq y\,,$$ and
$$z=\frac{x}{|x|}(|x|+|x-y|).$$
Then $$C(a,b)=\frac{2}{3^a-1}\text{ and }\lim_{a\to 1} C(a,b)=1.$$
\end{theorem}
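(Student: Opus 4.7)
By rotational invariance, I place $x=re_1$ with $y$ in the $(e_1,e_2)$-plane, reducing the problem to three parameters: $r=|x|$, $s=|y|$ (with $r\le s$), and $\omega\in[0,\pi]$ the angle between $x$ and $y$. Since $z$ is collinear with $x$ and $|z|=r+|x-y|\ge r$, the denominator simplifies to $|\mathcal A(x)-\mathcal A(z)|=f(|z|)-f(r)$, where $f(t)=t^a$ for $t<1$ and $f(t)=t^b$ for $t\ge 1$. The lower bound $C(a,b)\ge 2/(3^a-1)$ is witnessed by $r=s<1/3$ and $y=-x$: then $x,y,z$ all lie inside the unit ball with $|z|=3r$, and direct computation yields $|\mathcal A(x)-\mathcal A(y)|=2r^a$ and $|\mathcal A(x)-\mathcal A(z)|=r^a(3^a-1)$, whence $Q=2/(3^a-1)$.

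For the upper bound I split into cases according to the positions of $r,s,|z|$ relative to $1$. The decisive one is the interior case $|z|<1$: here $\mathcal A$ is positively homogeneous of degree $a$, so $Q$ depends only on $t=s/r\ge 1$ and $\omega$, namely
\[
\phi(t,\omega)=\frac{\sqrt{1+t^{2a}-2t^a\cos\omega}}{\bigl(1+\sqrt{1+t^2-2t\cos\omega}\bigr)^a-1}.
\]
The strategy is to show that $\omega\mapsto\phi(t,\omega)$ is non-decreasing on $[0,\pi]$ for each fixed $t\ge 1$, reducing to the antipodal case $\phi(t,\pi)=(1+t^a)/((2+t)^a-1)$, and then that $t\mapsto\phi(t,\pi)$ is non-increasing on $[1,\infty)$. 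The second step is short: the derivative has the sign of $(2+t)^{a-1}(2t^{a-1}-1)-t^{a-1}$, which is $\le 0$ iff $(2+t)^{1-a}+t^{1-a}\ge 2$, and this is immediate because each summand is already $\ge 1$ for $t\ge 1,\,a\le 1$. Thus $\phi(t,\omega)\le\phi(1,\pi)=2/(3^a-1)$. The remaining cases where some of $r,s,|z|$ exceed $1$ only enlarge the denominator, since $f$ then uses the larger exponent $b\ge a$ outside the unit ball; in particular the fully-exterior case $1\le r\le s$ is handled by the same argument with $a$ replaced by $b$, giving $Q\le 2/(3^b-1)\le 1\le 2/(3^a-1)$. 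Mixed cases (e.g.\ $r<1\le s$) are reduced to comparison with these extreme regimes by straightforward estimates using the triangle inequality and the bound $|z|\ge s$.

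The principal obstacle is the monotonicity of $\phi(t,\omega)$ in $\omega$ inside the unit ball. Both numerator and denominator are decreasing in $\cos\omega$, so one must compare their logarithmic derivatives. Setting $v=|x-y|$ and computing $\partial_{\cos\omega} Q$, the required sign condition reduces to an inequality roughly of the form $aF^2(1+v)^{a-1}\le r^{a-1}s^{a-1}v\,G$ over the admissible range $v\in[s-r,s+r]$, and it is here that the argument becomes genuinely technical; I expect it to require exploiting the concavity of $u\mapsto u^a$ for $0<a\le 1$, perhaps after normalizing $r=1$ and splitting into sub-ranges of $v$. Once this monotonicity is established, the final limit $\lim_{a\to 1}C(a,b)=\lim_{a\to 1}2/(3^a-1)=2/2=1$ is immediate.
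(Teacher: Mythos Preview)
Your overall architecture matches the paper's: the same lower-bound configuration, the same case split, and the same two-step reduction in the interior case (first maximize over the angle, then over $t=s/r$). Your argument for the $t$-monotonicity of $\phi(t,\pi)$ is correct and in fact cleaner than the paper's corresponding Lemma, which proves the equivalent inequality $(3^a-1)(1+p^a)\le 2((2+p)^a-1)$ by a separate differentiation argument.

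However, there are genuine gaps beyond the one you flag. The angle-monotonicity step you identify as the ``principal obstacle'' is indeed the heart of the matter; the paper isolates it as a lemma and proves it by differentiating $f_{p,a}(s)=(1+p^{2a}-2p^a\cos s)/((1+X)^a-1)^2$ in $s$, using the elementary bounds $p^{1-a}+p^{1+a}\le 1+p^2$ and the Bernoulli inequality $(1+X)^{1-a}<1+(1-a)X$ to force $f'_{p,a}(s)/\sin s>0$, so that the only critical points are $s=0,\pi$. This is where $0<a\le 1$ is essential.

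Because of this, your treatment of the exterior case $1\le r\le s$ is actually wrong, not merely sketchy. The ``same argument with $a$ replaced by $b$'' fails in both steps when $b>1$: the angle-monotonicity reverses (e.g.\ with $b=2$, $t=2$ one has $\phi(2,0)=1>1/3=\phi(2,\pi)$), and the $t$-monotonicity of $\phi(t,\pi)$ also reverses, since your own criterion $(2+t)^{1-b}+t^{1-b}\ge 2$ fails for $b>1$. In particular the claimed bound $Q\le 2/(3^b-1)$ is false. The paper instead gives a short direct chain of inequalities showing $Q\le 1$ in this regime, using $(1+|1-pe^{it}|)^b\ge p^{b-1}(1+|1-pe^{it}|)$ and the triangle inequality. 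Similarly, the mixed case $r<1\le s$ is not ``straightforward'': the paper handles it by writing $\alpha=r^a/R^b$, $\beta=r/R$, observing $\beta\le\alpha\le\beta^a$ (under a side condition), choosing $c\in[a,1]$ with $\beta^c=\alpha$, and then invoking the interior-case lemmas with exponent $c$. Your appeal to $|z|\ge s$ and the triangle inequality does not by itself produce this reduction.
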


Because ${\mathcal A}_{a,b}$ agrees with $x \mapsto  |x|^{a-1}x$ in ${\mathbb B}^n\,,$
we can compare
Theorem \ref{kal} to Theorems \ref{Mthm}, \ref{Dthm}, and \ref{Bthm}.
We also have the following upper bound for $\alpha_p\,:$

\begin{theorem} \label{mymaj} For all $x,y \in {\mathbb R}^n$ and $p \in (0,1)$
\begin{equation}
\alpha_p(x,y) \le |{\mathcal A}_{p, 1/p}(x) - {\mathcal A}_{p, 1/p}(y)  | \,,
\end{equation}
and furthermore, if $|x|\le |y|$, we have also
\begin{equation}
\alpha_p(x,y) \le |{\mathcal A}_{p, 1/p}(x) - {\mathcal A}_{p, 1/p}(y)  |
\le
\frac{2}{3^{p}-1}  |{\mathcal A}_{p, 1/p}(x) - {\mathcal A}_{p, 1/p}(z)  |
\end{equation}
where $z$ is as in Theorem \ref{kal}.
\end{theorem}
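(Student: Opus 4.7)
\medskip

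\noindent\textbf{Proof plan.} The second (right-hand) inequality will follow directly from Theorem~\ref{kal}: for $p\in(0,1)$ we have $0<p\le 1\le 1/p$, and the admissible choice $a=p$, $b=1/p$ gives $C(p,1/p)=2/(3^p-1)$, so under the assumption $|x|\le|y|$ Theorem~\ref{kal} applied to ${\mathcal A}={\mathcal A}_{p,1/p}$ is precisely the stated bound. All the substantive content therefore lies in the first inequality
\[
\alpha_p(x,y)\le |{\mathcal A}_{p,1/p}(x)-{\mathcal A}_{p,1/p}(y)|,
\]
which holds without any ordering of $|x|$ and $|y|$.

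My strategy for the first inequality is to reduce it to a one-variable monotonicity comparison. Both $|x|^{p-1}x$ and ${\mathcal A}_{p,1/p}(x)$ are radial, so set $r=|x|$, $s=|y|$, let $\theta$ denote the angle between $x$ and $y$, and introduce the profiles
\[
G(t)=t^p,\qquad F(t)=\begin{cases} t^p, & 0\le t\le 1,\\ t^{1/p}, & t\ge 1.\end{cases}
\]
Since $1/p\ge p$ we have $F(t)\ge G(t)\ge 0$ on $[0,\infty)$, and in particular $F(r)F(s)\ge G(r)G(s)$. Expanding both squared norms via the law of cosines yields
\[
|{\mathcal A}_{p,1/p}(x)-{\mathcal A}_{p,1/p}(y)|^2-\alpha_p(x,y)^2 = (F(r)^2-G(r)^2)+(F(s)^2-G(s)^2)-2\cos\theta\,(F(r)F(s)-G(r)G(s)),
\]
and invoking $\cos\theta\le 1$ together with the sign $F(r)F(s)-G(r)G(s)\ge 0$, this quantity is at least $(F(r)-F(s))^2-(G(r)-G(s))^2$. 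The inequality therefore reduces to the scalar claim $|F(r)-F(s)|\ge |G(r)-G(s)|$ for all $r,s\ge 0$.

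I would finish by case analysis according to whether each of $r,s$ lies in $[0,1]$ or in $[1,\infty)$. When $r,s\in[0,1]$ the two sides coincide; the mixed case $r\le 1\le s$ reduces after cancellation to the trivial inequality $s^{1/p}\ge s^p$; and the case $1\le r\le s$ amounts to showing that $h(t)=t^{1/p}-t^p$ is non-decreasing on $[1,\infty)$, which follows from $h'(t)=p^{-1}t^{1/p-1}-p\,t^{p-1}\ge p^{-1}-p\ge 0$ for $t\ge 1$. The step I expect to require the most care is the angle-maximization, because one must confirm the sign of the coefficient of $\cos\theta$ in the above identity; this reduces to $F(r)F(s)\ge G(r)G(s)$, which is immediate, so I foresee no real obstacle beyond the bookkeeping of the three cases.
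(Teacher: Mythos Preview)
Your proposal is correct and follows essentially the same route as the paper. Both arguments expand the squared distances via the law of cosines, invoke $\cos\theta\le 1$ against the nonnegative cross-term, and reduce the problem to the scalar comparison $|F(r)-F(s)|\ge|G(r)-G(s)|$, which in the nontrivial case $1\le r\le s$ is exactly the monotonicity of $t\mapsto t^{1/p}-t^{p}$ on $[1,\infty)$; your presentation is slightly more unified (you isolate the scalar inequality once and then split cases), while the paper carries out the law-of-cosines chain separately in the mixed case $|x|<1<|y|$ and in the case $1<|x|<|y|$, but the mathematical content is identical.
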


For a systematic comparison of the above results, see Section 5 where it is shown
that sometimes the bound in Theorem \ref{mymaj} is better than the other bounds
in Theorems \ref{Mthm}, \ref{Dthm}, \ref{Bthm}.

We also discuss some properties of the distortion function $\varphi_K(r)$
associated with the quasiconformal Schwarz lemma, see \cite{lv}.

\bigskip

{\sc Acknowledgments.} The first author is indebted to the Graduate
School of Mathematical Analysis and its Applications for support.
He also wishes to acknowledge the expert help of
Dr. H. Ruskeep\"a\"a in the use of the Mathematica$^{\tiny \textregistered}$ software \cite{ru}.
The fourth author was, in part, supported by the Academy of Finland, Project 2600066611.

\section{Preliminary results}

We prove here some inequalities for elementary functions that will
be applied in later sections. These inequalities deal with the
logarithm and some of them may be new results. Note also in the
paper \cite{kmv} some elementary Bernoulli type inequalities were
proved and used as a key tool. We use the notation sh, ch, th, arsh,
arch and arth to denote the hyperbolic sine, cosine,
tangent and their inverse functions, respectively.

As well-known, conformal invariants of geometric function theory
are on one hand closely linked with function theoretic extremal
problems and on the other hand with special functions such as
complete elliptic integrals, elliptic functions and hypergeometric
functions. The connection between conformal invariants and special
functions is provided by conformal maps which can be applied to express
maps of quadrilaterals and ring domains onto canonical ring domains such
as a rectangle and an annulus.

For example, the quasiconformal version of the Schwarz lemma says that for a
$K$-quasiconformal map of the unit disk $\mathbb{B}^2$ onto itself
keeping $0$ fixed, we have for all $z \in \mathbb{B}^2$
the sharp bound \cite[p. 64]{lv}
\begin{equation}
|f(z)| \le \varphi_K(|z|)\,, \quad \varphi_K(r) = \mu^{-1}(\mu(r)/K)
\end{equation}
where
$\mu:(0,1)\longrightarrow(0,\infty)$ is a decreasing homeomorphism defined by
\begin{equation}
\label{dec_hom} \mu(r)=\frac\pi 2\,\frac{{\K}(r')}{{\K}(r)}, \quad {\K}(r)=\int_0^1\frac{dx}{\sqrt{(1-x^2)(1-r^2x^2)}}\, ,
\end{equation}
and where ${\K}(r)$ is Legendre's complete elliptic integral of the
first kind and $ r'=\sqrt{1-r^2},$ for all $r\in(0,1)$.
The function $\varphi_K(r)$ has numerous applications to quasiconformal mapping theory,
see \cite{lv,k,avvb}, which motivates the study of its properties. One of the challenges
is to find bounds, in the range $(0,1)$, and yet asymptotically sharp when $K \to 1\,.$
For instance, the change of hyperbolic distances under $K$-quasiconformal mappings of the
unit disk onto itself can be estimated in terms of the function $\varphi_K\,,$ see
\cite{avvb,lv}.

\begin{lemma}\label{2C} The following functions are monotone increasing
from $(0,\infty)$ onto $(1,\infty)$;

\noindent $(1)\qquad f(x)=\displaystyle\frac{(1+x)\log(1+x)}{x}\,,\quad  (2)\qquad g(x)=\displaystyle\frac{x}{\log(1+x)}\,,$\\
$(3)$ For a fixed $t\in(0,1)$, the function $h(K)=K(1-t^{2/K})$ is monotone
increasing on $(1,\infty)$.
\end{lemma}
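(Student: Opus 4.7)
The plan is to treat each of the three parts by a direct derivative computation, invoking in each case the elementary inequality $\log(1+x) < x$ for $x > 0$ (or an immediate consequence of it) to pin down the sign.

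For part (1), I would differentiate via the quotient rule: using $[(1+x)\log(1+x)]' = \log(1+x) + 1$, a short simplification gives
$$f'(x) = \frac{x - \log(1+x)}{x^{2}},$$
which is positive for $x > 0$ by the basic log inequality. For the range claim, I would check the endpoint limits: the expansion $(1+x)\log(1+x) = x + x^{2}/2 + O(x^{3})$ yields $\lim_{x\to 0^{+}} f(x) = 1$, while $\lim_{x\to\infty} f(x) = \infty$ is immediate from $\log(1+x)\to\infty$. Combined with continuity and strict monotonicity, this gives the surjection onto $(1,\infty)$.

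For part (2), the same approach gives
$$g'(x) = \frac{\log(1+x) - x/(1+x)}{[\log(1+x)]^{2}},$$
and the numerator, after multiplying by $1+x$, equals $(1+x)\log(1+x) - x = x[f(x) - 1]$. This is positive on $(0,\infty)$ as an immediate consequence of part (1); alternatively, it can be shown directly by noting that the left side vanishes at $0$ and has derivative $\log(1+x) > 0$. The limits $g(0^{+}) = 1$ and $g(\infty) = \infty$ follow from the same Taylor expansion and asymptotic reasoning as in (1), closing the onto statement.

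Part (3) is the one step requiring a small trick. I would substitute $u := t^{2/K}$, so that $u \in (0,1)$ and $\log u = (2/K)\log t$. Direct differentiation then yields
$$h'(K) = 1 - t^{2/K} + \frac{2}{K}(\log t)\, t^{2/K} = 1 - u + u\log u.$$
Setting $\phi(u) := 1 - u + u\log u$, I would observe that $\phi(1) = 0$ and $\phi'(u) = \log u < 0$ on $(0,1)$, so $\phi$ is strictly decreasing on $(0,1]$ and hence $\phi(u) > \phi(1) = 0$ for all $u \in (0,1)$. Therefore $h'(K) > 0$ on $(1,\infty)$, as required.

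The only non-routine moment in the argument is recognizing the substitution $u = t^{2/K}$ in part (3), which compresses the derivative into the clean expression $1 - u + u\log u$; once that is spotted, everything else reduces to elementary sign checks and limit evaluations.
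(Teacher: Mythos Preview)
Your proof is correct in all three parts. For (1) and (2) it essentially matches the paper: the paper also reduces (2) to (1) via the same derivative computation, though it simply cites an external reference \cite{kmv} for (1) rather than writing out the $f'(x)=(x-\log(1+x))/x^2$ argument you give.

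The real difference is in (3). The paper does not differentiate $h$ directly; instead it observes that with $a=t^{2}\in(0,1)$ and $x=1/K$ one has $h(K)=(1-a^{x})/x$, and then invokes the known fact (citing \cite[1.58(3)]{avvb}) that $x\mapsto (1-a^{x})/x$ is decreasing on $(0,1)$, so $h$ increases as $K$ increases. Your route---substituting $u=t^{2/K}$ to collapse $h'(K)$ into $\phi(u)=1-u+u\log u$ and then checking $\phi(1)=0$, $\phi'(u)=\log u<0$---is a clean self-contained alternative that avoids the external citation. Both arguments are short; yours has the advantage of being fully verifiable on the page, while the paper's change-of-variable to $(1-a^{x})/x$ connects the result to a standard monotonicity lemma.
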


\begin{proof} For the proof of (1) see \cite[p. 7]{kmv}. For (2), we get
$$g^{'}(x)=\frac{1}{\log(1+x)}-\frac{x}{(1+x)(\log(1+x))^2}
=\frac{(1+x)\log(1+x)-x}{(1+x)(\log(1+x))^2},$$
and $g^{'}(x)>0$ by (1). Moreover,
$g$ tends to $1$ and $\infty$ when $x$ tends $0$ and $\infty$.
Proof of $(3)$ follows easily because $x\mapsto(1-a^{x})/x$ is decreasing on $(0,1)$ for each $a\in (0,1)$ \cite[1.58(3)]{avvb}.
\end{proof}

\begin{corollary}\label{2e} For a fixed $x\in(0,1)$,
the following functions, (1) $f(a)=(1+ax)^{1/a}$,
(2) $g(a)=(\log(1+x^a))^{1/a}$ are decreasing
and increasing on $(1,\infty)$, respectively.
(3) The following inequality holds for  $x\geq 0$ and $a\in[0,1]$,
$$\log(1+x^a)\leq \max\{\log(1+x),\log^a(1+x)\}.$$
\end{corollary}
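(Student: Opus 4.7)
My plan is to take logarithms throughout and reduce each statement to a monotonicity assertion that follows from Lemma \ref{2C}.

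For (1), write $\log f(a)=\log(1+ax)/a$ and substitute $u=ax$ to obtain $\log f(a)=x\cdot\log(1+u)/u$. As $a$ increases on $(1,\infty)$ with $x>0$ fixed, $u$ increases, and $\log(1+u)/u$ is the reciprocal of the function in Lemma \ref{2C}(2), hence decreasing. Therefore $\log f$ decreases in $a$, proving (1).

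For (2), write $\log g(a)=\log\log(1+x^a)/a$. The substitution $y=x^a$ places $a\in(0,\infty)$ in bijection with $y\in(0,1)$ (with $a$ increasing iff $y$ decreasing), and transforms the expression into $\log g(a)=(\log x)\,H(y)$, where
$$H(y)=\frac{\log\log(1+y)}{\log y}.$$
Since $\log x<0$, it suffices to show $H$ is increasing on $(0,1)$; this will in fact give $g$ increasing on all of $(0,\infty)$, slightly more than the claim, and will be reused in (3). Writing $\log(1+y)=y\,q(y)$ with $q(y)=\log(1+y)/y$ yields the decomposition $H(y)=1+\log q(y)/\log y$. By Lemma \ref{2C}(2) the function $y/\log(1+y)$ is increasing, hence $q$ is decreasing and $(\log q)'<0$. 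Differentiating $\phi(y):=\log q(y)/\log y$ produces the numerator $(\log q)'(y)\log y-\log q(y)/y$, and a sign check using $(\log q)'<0$, $\log y<0$, $\log q<0$, $y>0$ shows that both contributions are positive. Hence $\phi'>0$, so $H'>0$.

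For (3), I split on the size of $x$. If $x\geq 1$, then $a\leq 1$ gives $x^a\leq x$, so $\log(1+x^a)\leq\log(1+x)$, which is dominated by the stated maximum. If $x\in[0,1)$, the monotonicity of $g$ on all of $(0,\infty)$ established in (2) gives $g(a)\leq g(1)=\log(1+x)$ for $a\in(0,1]$, which rearranges (by raising both sides to the power $a$) to $\log(1+x^a)\leq\log^a(1+x)$; the boundary value $a=0$ reduces to the trivial $\log 2\leq 1=\log^0(1+x)$. The step I expect to be most delicate is the monotonicity of $H$ in part (2): every quantity entering the derivative is negative on $(0,1)$, so without the right decomposition $H=1+\log q/\log y$ it is easy to mis-orient an inequality.
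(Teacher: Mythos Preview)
Your argument is correct. The paper states this result as a Corollary of Lemma~\ref{2C} without supplying any proof, so there is no paper argument to compare against; you have filled in the omitted details. Your approach---taking logarithms and reducing everything to the monotonicity of $u/\log(1+u)$ from Lemma~\ref{2C}(2)---is the natural one and matches the intended dependence on the preceding lemma. The decomposition $H(y)=1+\log q(y)/\log y$ in part~(2), together with the sign check on $(0,1)$ (all four of $(\log q)'$, $\log y$, $\log q$, $-1/y$ contribute with the right sign), is a clean way to obtain the monotonicity of $g$, and your observation that it in fact holds on all of $(0,\infty)$ is exactly what is needed to feed into part~(3).
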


\begin{lemma}\label{2e0} For $K>1\,,r\in(0,1),\,u={\rm arch}(1/r)/K$, the following functions
\begin{enumerate}
\item $f(K)=r\,{\rm arth}(1/{\rm ch(u)})
{\rm sh}(u),$
\item $g(K)=rK\,{\rm arth}(1/{\rm ch(u)})
{\rm sh}(u)$
\end{enumerate}
are strictly decreasing and increasing, respectively. Moreover,
both functions tend to $\sqrt{1-r^2}\,{\rm arth}(r)$ when $K$ tends to $1$.
\end{lemma}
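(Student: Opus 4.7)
\medskip

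\textbf{Proof plan.}
Let $c:=\mathrm{arch}(1/r)$, so that $u=c/K$ is a strictly decreasing function of $K$ on $(1,\infty)$ with range $(0,c)$. Both assertions then reduce to monotonicity of explicit functions of the single variable $u$. A key computational tool is the identity $\mathrm{arth}(1/\mathrm{ch}(u))=\log\coth(u/2)$, which, together with $\mathrm{sh}(u)=2\mathrm{sh}(u/2)\mathrm{ch}(u/2)$, gives the clean formula
\[
\frac{d}{du}\,\mathrm{arth}(1/\mathrm{ch}(u)) \;=\; -\frac{1}{\mathrm{sh}(u)}.
\]

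For (1), write $f(K)=F(c/K)$ with $F(u):=r\,\mathrm{sh}(u)\,\mathrm{arth}(1/\mathrm{ch}(u))$. The identity above gives $F'(u)=r\bigl[\mathrm{ch}(u)\mathrm{arth}(1/\mathrm{ch}(u))-1\bigr]$; setting $s:=1/\mathrm{ch}(u)\in(0,1)$, this equals $r(\mathrm{arth}(s)-s)/s$, which is positive by the Taylor series of $\mathrm{arth}$. Hence $F$ is strictly increasing on $(0,c)$, and composing with the strictly decreasing map $K\mapsto c/K$ shows that $f$ is strictly decreasing in $K$.

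For (2), note that $g(K)=Kf(K)=rc\,\psi(u)$ with $\psi(u):=\mathrm{sh}(u)\mathrm{arth}(1/\mathrm{ch}(u))/u$, so it suffices to show that $\psi$ is strictly decreasing on $(0,c)$. I would apply the monotone form of l'H\^opital's rule to $P(u):=\mathrm{sh}(u)\mathrm{arth}(1/\mathrm{ch}(u))$ and $Q(u):=u$: the limits $P(0+)=Q(0+)=0$ follow from the estimate $P(u)\sim u\log(\sqrt 2/u)$ as $u\to 0+$, and $P'(u)/Q'(u)$ is precisely the positive quantity $\mathrm{ch}(u)\mathrm{arth}(1/\mathrm{ch}(u))-1$ encountered in (1). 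Its derivative equals
\[
\frac{\mathrm{sh}^2(u)\,\mathrm{arth}(1/\mathrm{ch}(u)) - \mathrm{ch}(u)}{\mathrm{sh}(u)} \;=\; \frac{(v^2-1)\mathrm{arth}(1/v) - v}{\mathrm{sh}(u)}
\]
with $v=\mathrm{ch}(u)$; putting $s:=1/v\in(0,1)$, the bracketed numerator is negative precisely when $\mathrm{arth}(s)<s/(1-s^2)$, an inequality transparent from comparing the series $\sum_{k\geq0}s^{2k+1}/(2k+1)$ and $\sum_{k\geq0}s^{2k+1}$. Thus $P'/Q'$ is strictly decreasing, and monotone l'H\^opital delivers $\psi$ strictly decreasing on $(0,c)$, hence $g$ strictly increasing in $K$.

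The limit claim is immediate: as $K\to 1^+$ we have $u\to c$, so $\mathrm{ch}(u)\to 1/r$ and $\mathrm{sh}(u)\to\sqrt{1-r^2}/r$, and direct substitution gives $\lim_{K\to1^+}f(K)=\lim_{K\to1^+}g(K)=\sqrt{1-r^2}\,\mathrm{arth}(r)$. The principal difficulty is (2); the key observation is that factoring out $K=c/u$ and applying monotone l'H\^opital cleanly reduces the statement to the two elementary inequalities $\mathrm{arth}(s)>s$ and $\mathrm{arth}(s)<s/(1-s^2)$ on $(0,1)$.
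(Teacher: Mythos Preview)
Your argument is correct. For part (1) and the limit, you do essentially what the paper does: differentiate (via the chain rule through $u=c/K$) and reduce the sign to the elementary inequality $\mathrm{arth}(s)>s$ on $(0,1)$. One harmless slip: the asymptotic should be $P(u)\sim u\log(2/u)$, not $u\log(\sqrt 2/u)$, but this does not affect $P(0^+)=0$.

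For part (2) your route genuinely differs from the paper's. The paper simply writes out a formula for $g'(K)$ and asserts it is nonnegative; after simplification this amounts to the claim
\[
u + \mathrm{arth}\!\left(\tfrac{1}{\mathrm{ch}\,u}\right)\bigl(\mathrm{sh}\,u - u\,\mathrm{ch}\,u\bigr)\ \ge\ 0,
\]
whose sign is not at all transparent (both sides of the equivalent inequality $\mathrm{arth}(1/\mathrm{ch}\,u)\le u/(u\,\mathrm{ch}\,u-\mathrm{sh}\,u)$ behave like $2e^{-u}$ for large $u$). Your approach sidesteps this: factoring $g(K)=rc\,\psi(u)$ with $\psi(u)=P(u)/u$ and invoking the monotone form of l'H\^opital's rule reduces everything to showing that $P'(u)=\mathrm{ch}(u)\,\mathrm{arth}(1/\mathrm{ch}\,u)-1$ is decreasing, which you cleanly settle via $\mathrm{arth}(s)<s/(1-s^2)$. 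This is a tidier and more self-contained justification than the paper's bare assertion, at the small cost of invoking the l'H\^opital monotonicity lemma.
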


\begin{proof} Differentiating $f$ with respect to $K$ we get

$$f^{'}(K)=-\frac{r\,\textrm{arth}(1/r)}{K^2}\left(\textrm{arth}
\left(\frac{1}{\textrm{ch}(u}\right)
{\rm ch}(u)-1\right)\leq 0,$$
$$g^{'}(K)=r\left(\textrm{ch}\left(\frac{1}{r}\right)
\left(1-\textrm{arth}\left(\frac{1}{\textrm{ch}(u)}\right)
+K\,\textrm {arth}\left(\frac{1}{\textrm{ch}(u)}\right)
\textrm{sh}(u)\right)\right)\geq 0,$$
respectively. We obtain
$$f(1)=g(1)=r\,\textrm{arth}(r)\sqrt{({\rm ch}({\rm arch}(1/r)))^2-1}=\sqrt{1-r^2}\,\textrm{arth}(r).$$
\end{proof}

\begin{lemma}\label{2ll} (1) For a fixed $t>0$, the following function is monotone
increasing in $K>1$. Moreover, for $t=t_0=(e-1)/(e+1)$, the function is increasing from
$(1,\infty)$ onto $(m_1,1)$,
$$f(K)=\frac{K-\log(1/t)}{t^{2/K}(K+\log(1/t))}
,\; m_1=\frac{1+\log t_0}{t^2_0(1-\log t_0)}\approx 0.6027..$$
(2) The following function is monotone increasing
 from $(1,\infty)$ onto $(m_2,1)$,
 $$g(K)=\frac{t^{1/K}_0\log(1/t^2_0)}{K(1-t^{2/K}_0)},
 \; m_2=\frac{2t_0\log t_0}{t^2_0-1}\approx 0.9072..\;.$$
\end{lemma}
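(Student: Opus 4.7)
The plan is to reduce both claims to the monotonicity of a simple scalar function by introducing the shorthand $L=\log(1/t)$ and performing a clean algebraic simplification in each case.

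For part (1), rewrite $f(K)=e^{2L/K}(K-L)/(K+L)$, valid since $t^{-2/K}=e^{2L/K}$. Taking logarithms (legal once $K>L$) and differentiating yields
$$\frac{f'(K)}{f(K)}=-\frac{2L}{K^{2}}+\frac{1}{K-L}-\frac{1}{K+L}=-\frac{2L}{K^{2}}+\frac{2L}{K^{2}-L^{2}}=\frac{2L^{3}}{K^{2}(K^{2}-L^{2})}.$$
In the relevant range $t\in(0,1)$ this is positive, and for $t=t_{0}$ we additionally have $L_{0}:=-\log t_{0}<1\le K$, so $K^{2}>L_{0}^{2}$ and $f'(K)/f(K)>0$. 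Since $f(K)>0$ this gives $f'(K)>0$, so $f$ is strictly increasing. The endpoint values are a direct substitution: $f(1)=e^{2L_{0}}(1-L_{0})/(1+L_{0})=(1+\log t_{0})/\bigl(t_{0}^{2}(1-\log t_{0})\bigr)=m_{1}$, while $\lim_{K\to\infty}f(K)=1\cdot 1=1$.

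For part (2), set $L_{0}=-\log t_{0}>0$. Since $\log(1/t_{0}^{2})=2L_{0}$, $t_{0}^{1/K}=e^{-L_{0}/K}$ and $t_{0}^{2/K}=e^{-2L_{0}/K}$, the key observation is the factorisation
$$1-t_{0}^{2/K}=e^{-L_{0}/K}\bigl(e^{L_{0}/K}-e^{-L_{0}/K}\bigr)=2e^{-L_{0}/K}\sinh(L_{0}/K).$$
The factor $e^{-L_{0}/K}$ then cancels between numerator and denominator, yielding the strikingly clean closed form
$$g(K)=\frac{L_{0}/K}{\sinh(L_{0}/K)}.$$
Now $\phi(x):=x/\sinh x$ is strictly decreasing on $(0,\infty)$, because $\phi'(x)=(\sinh x-x\cosh x)/\sinh^{2}x=\cosh x\,(\tanh x-x)/\sinh^{2}x<0$ (using $\tanh x<x$ for $x>0$). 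Since $x=L_{0}/K$ is a decreasing function of $K$, the composition $g(K)=\phi(L_{0}/K)$ is increasing. The endpoints evaluate to $g(1)=L_{0}/\sinh L_{0}=2t_{0}L_{0}/(1-t_{0}^{2})=2t_{0}\log t_{0}/(t_{0}^{2}-1)=m_{2}$ and $\lim_{K\to\infty}g(K)=\lim_{x\to 0^{+}}\phi(x)=1$.

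The only step that is not completely mechanical is spotting the identity $1-e^{-2y}=2e^{-y}\sinh y$ driving part (2); once this substitution is in place, both halves of the lemma reduce to textbook calculus. In part (1) the slight care required is to keep track of the sign of $L$ and of $K-L$, both of which are positive in the regime where the lemma is applied, so that the cube $L^{3}$ in the numerator of $f'/f$ has the correct sign.
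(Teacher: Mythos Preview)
Your argument is correct for the case $t=t_0$, which is all that the paper uses downstream. In part~(1) your logarithmic-differentiation route lands on the same derivative formula as the paper's direct computation, namely $f'(K)=2L^{3}e^{2L/K}/\bigl(K^{2}(K+L)^{2}\bigr)$. One caveat: the lemma as stated asserts monotonicity for \emph{every} fixed $t\in(0,1)$, and your logarithm step needs $K>L$, which can fail when $t<e^{-K}$. You essentially flag this in your closing paragraph, restricting to the $t_0$ regime where $L_0<1\le K$. To cover the general case you can simply multiply through: the identity $f'/f=2L^{3}/\bigl(K^{2}(K^{2}-L^{2})\bigr)$ is valid wherever $f\ne 0$, and when $K<L$ both $f$ and $K^{2}-L^{2}$ are negative, so $f'>0$ still holds.

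Part~(2) is where you genuinely diverge from the paper, and to good effect. The paper differentiates $g$ directly, works through several lines of algebra, and ultimately reduces the sign of $g'(K)$ to the inequality $f(K)<1$ from part~(1). Your hyperbolic substitution $1-e^{-2y}=2e^{-y}\sinh y$ collapses $g(K)$ to the single closed form $(L_0/K)/\sinh(L_0/K)$, after which monotonicity is immediate from the standard fact that $x/\sinh x$ is decreasing on $(0,\infty)$. This makes part~(2) independent of part~(1), eliminates the intermediate algebra, and delivers the endpoint values $m_2$ and $1$ with no extra work. The paper's approach is more mechanical and perhaps generalizes more readily to other quotients, but for this particular function your reduction is considerably cleaner.
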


\begin{proof} Differentiating $f$ with respect to $K$ we get
\begin{eqnarray*}
f^{'}(K)&=&\frac{t^{-2/K}}{K+\log(1/t)}-\frac{t^{-2/K}(K-\log(1/t))}{(K+\log(1/t))^2}
               +\frac{2t^{-2/K}(K-\log(1/t))\log t}{K^2(K+\log(1/t))^2}\\
           	 &=&\frac{2t^{-2/K}(K^2\log(1/t)+K^2\log t-(\log(1/t))^2\log t)}{K^2(K+\log(1/t))^2}\\
        &=&\frac{2t^{-2/K}(\log(1/t))^3}{K^2(K+\log(1/t))^2}>0.
\end{eqnarray*}
For $t=t_0$, $f$ tends to $m_1$ and $1$ when $K$ tends to $1$ and $\infty$,
respectively.\\
For the proof of (2), we differentiate $g$ with respect to $K$ and get,
\begin{eqnarray*}
 g^{'}(K)&=&-\frac{t^{1/K}_0\log(1/t^2_0)}{K^2(1-t^{2/K}_0)}
            -\frac{2t^{3/K}_0\log(1/t^2_0)\log t_0}{K^3(1-t^{2/K}_0)^2}
             -\frac{t^{1/K}_0\log(1/t^2_0)\log t_0}{K^3(1-t^{2/K}_0)}\\
         &=&t^{1/K}_0\log(1/t^2_0)(-K(1-t^{2/K}_0)-(1+t^{2/K}_0)\log t_0)/(K^3(1-t^{2/K}_0)^2)\\
         &=&t^{1/K}_0\log(1/t^2_0)(t^{2/K}_0(1+\log(1/t_0))-(K-\log (1/t_0)))/(K^3(1-t^{2/K}_0)^2)\\
          &=&\frac{t^{3/K}_0\log(1/t^2_0)(1+\log(1/t_0))}{K^3(1-t^{2/K}_0)^2}\left(1-
          \frac{K-\log (1/t_0)}{t^{2/K}_0(1+\log(1/t_0))}\right)>0
\end{eqnarray*}
by (1). We can see that $g$ tends to $m_2$ and
$1$ when $K$ tends to $1$ and $\infty$, respectively. This completes the proof.
\end{proof}

\begin{lemma} The following inequality holds for
$K\geq 1 $ and $t\in[t_0,1),t_0=(e-1)/(e+1)$

\begin{equation}\label{1c}
\log\left(\frac{1+t^{1/K}}{1-t^{1/K}}\right)\leq
K \log\left(\frac{1+t}{1-t}\right).
\end{equation}
\end{lemma}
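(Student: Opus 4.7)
The plan is to rewrite the inequality in terms of $\textrm{arth}$, differentiate in $K$, and reduce everything to a single clean auxiliary estimate. Using $\tfrac{1}{2}\log\frac{1+x}{1-x} = \textrm{arth}(x)$, inequality~(\ref{1c}) is equivalent to
$$F(K) := K\,\textrm{arth}(t) - \textrm{arth}(t^{1/K}) \ge 0, \qquad K \ge 1,\ t \in [t_0, 1).$$
Since $F(1) = 0$, it is enough to show $F'(K) \ge 0$ on $[1,\infty)$. I would record at the outset the key numerical fact $t_0 = (e-1)/(e+1) = \textrm{th}(1/2)$, so that the hypothesis $t \ge t_0$ yields $\textrm{arth}(t) \ge \textrm{arth}(t_0) = \tfrac12$.

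Next, differentiating and substituting $s = t^{1/K}$ (and using $\log(1/t) = K\log(1/s)$) should produce
$$F'(K) = \textrm{arth}(t) - \frac{s\log(1/s)}{K(1-s^2)}.$$
So the entire problem reduces to bounding the fraction above by $\textrm{arth}(t)$ for $s \in (0,1)$ and $K \ge 1$.

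The crux is the universal inequality $\dfrac{s\log(1/s)}{1-s^2} < \tfrac{1}{2}$ for every $s \in (0,1)$. Setting $q(s) := 1 - s^2 + 2s\log s$, this is equivalent to $q(s) > 0$ on $(0,1)$; this in turn follows because $q(1) = 0$ and $q'(s) = 2(\log s - s + 1) < 0$ on $(0,1)$ by the standard bound $\log s < s-1$, so $q$ strictly decreases to $0$. (This is essentially the content of Lemma~\ref{2ll}\,(2) reread with $s = t_0^{1/K}$.) Combining, for $K \ge 1$ and $t \in [t_0, 1)$,
$$\frac{s\log(1/s)}{K(1-s^2)} < \frac{1}{2K} \le \tfrac{1}{2} = \textrm{arth}(t_0) \le \textrm{arth}(t),$$
so $F'(K) > 0$ and $F(K) \ge F(1) = 0$, which is (\ref{1c}).

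Rather than a technical obstacle, the only insight the plan requires is noticing that the universal constant $1/2$ in the auxiliary estimate matches $\textrm{arth}(t_0)$ exactly: this alignment is what forces the threshold $t_0 = \textrm{th}(1/2)$ in the statement and explains why the hypothesis cannot be relaxed. Everything else is a one-line differentiation and a standard logarithmic inequality.
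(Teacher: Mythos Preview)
Your proof is correct, and it takes a genuinely different route from the paper's.

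The paper fixes $K$ and differentiates $h(t)=K\,\textrm{arth}(t)-\textrm{arth}(t^{1/K})$ in $t$. It first shows $h'(t)\ge 0$ for $t\ge 1/e$ via Lemma~\ref{2C}(3) together with an ad hoc estimate $Kt\ge t^{1/K}$, and then must separately verify the boundary value $h(t_0)\ge 0$ by a second monotonicity argument in $K$ that invokes Lemma~\ref{2ll}(2). You instead fix $t$ and differentiate in $K$; the boundary value $F(1)=0$ is then automatic, and the entire argument collapses to the single self-contained inequality $s\log(1/s)<\tfrac12(1-s^2)$ on $(0,1)$, which you prove in one line. Your approach is shorter, avoids both auxiliary lemmas, and makes transparent why the threshold is exactly $t_0=\textrm{th}(1/2)$: the constant $\tfrac12$ in the universal bound matches $\textrm{arth}(t_0)$. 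The paper's route, by contrast, obtains monotonicity on the slightly larger interval $t\ge 1/e$ before checking the endpoint, but this extra range is not used anywhere. Your parenthetical identification of the auxiliary inequality with Lemma~\ref{2ll}(2) (under $s=t_0^{1/K}$) is accurate, though your direct proof via $q(s)=1-s^2+2s\log s$ is cleaner and in fact valid for all $s\in(0,1)$, not just $s\in(t_0,1)$.
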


\begin{proof} Write $h(t)=K\textrm{arth}(t)-\textrm{arth}(t^{1/K})$.
Differentiating $h$ with respect to $t$ we get,
\begin{eqnarray*}
h^{'}(t)&=&\frac{K}{1-t^2}-\frac{t^{1/K-1}}{K(1-t^{2/K})}
         =\frac{K^2t(1-t^{2/K})-t^{1/K}(1-t^2)}{tK(1-t^2)(1-t^{2/K})}\\
        &\geq&\frac{Kt(1-t^2)-t^{1/K}(1-t^2)}{tK(1-t^2)(1-t^{2/K})}
       =\frac{Kt-t^{1/K}}{Kt(1-t^{2/K})}\geq 0.\\
\end{eqnarray*}
The first inequality holds by Lemma \ref{2C}(3) and the second one holds
when $Kt\geq t^{1/K}\Leftrightarrow t\geq (1/K)^{K/(K-1)}=c_1(K)$. It is easy to see by
Lemma \ref{2C}(1) that $c_1(K)$ is decreasing in $(1,\infty)$. We see that $c_1(K)\to 1/e\approx 0.3679..$ and $0$ when $K\to 1$ and $\infty$ respectively, hence $h(t)$ is increasing in $t\geq 1/e$.\\
We can see that $h(t_0)=K(1-2\,\textrm{arth}(t^{1/K}_0)/K)/2$.
Now it is enough to prove that
$f(K)=2\,\textrm{arth}(t^{1/K}_0)/K<1$.
Differentiating $f$ with respect to $K$ we get
\begin{eqnarray*}
f^{'}(K)&=&\frac{-2\,\textrm{arth}(t^{1/K}_0)}{K^2}-
           \frac{2t^{1/K}_0\log(t_0)}{K^3(1-t^{2/K}_0)}\\
           &=&2(-K(1-t^{2/K}_0)\,\textrm{arth}(t^{1/K}_0)-
           t^{1/K}_0\log(t_0))/(K^3(1-t^{2/K}_0))\\
           &\leq&2(-K(1-t^{2/K}_0)\,\textrm{arth}(t_0)+t^{1/K}_0\log(1/t_0))/
           (K^3(1-t^{2/K}_0))\\
           &=&2(-(K(1-t^{2/K}_0)/2)\log\left(\frac{1+t_0}{1-t_0}\right)
           +t^{1/K}_0\log(1/t_0))/(K^3(1-t^{2/K}_0))\\
           &=&(t^{1/K}_0\log(1/t^2_0)-K(1-t^{2/K}_0))
          /(K^3(1-t^{2/K}_0))\\
           &=&\frac{1}{K^2}\left(\frac{t^{1/K}_0\log(1/t^{2}_0)}{K(1-t^{2/K}_0)}-1\right)<0
\end{eqnarray*}
by Lemma $\ref{2ll}(2)$, hence $f$ is a monotone decreasing function
from $(1,K)$ onto $(0,1/2)$. This implies the proof.
\end{proof}

\begin{lemma} The following inequality holds for
$K\geq 1 $ and $t\in(0,t_0],t_0=(e-1)/(e+1)$
\begin{equation}\label{1a}
\log\left(\frac{1+t^{1/K}}{1-t^{1/K}}\right)\leq
K \left(\log\left(\frac{1+t}{1-t}\right)\right)^{1/K}.
\end{equation}
\end{lemma}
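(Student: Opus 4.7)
Set
$h(t)=K\bigl(\log\tfrac{1+t}{1-t}\bigr)^{1/K}-\log\tfrac{1+t^{1/K}}{1-t^{1/K}}$
on $(0,t_0]$; the goal is to show $h(t)\geq 0$. The boundary values are straightforward. As $t\to 0^+$, we have
$K\bigl(\log\tfrac{1+t}{1-t}\bigr)^{1/K}\sim K(2t)^{1/K}$ and $\log\tfrac{1+t^{1/K}}{1-t^{1/K}}\sim 2t^{1/K}$,
and the elementary inequality $K\cdot 2^{1/K}\geq 2$ for $K\geq 1$ (equivalently $K\geq 2^{1-1/K}$) gives $h(0^+)= 0$. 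At $t=t_0$, since $\log\tfrac{1+t_0}{1-t_0}=1$, we have $h(t_0)=K-2\,\mathrm{arth}(t_0^{1/K})$, which is $\geq 0$ by inequality \eqref{1c} applied at $t=t_0$.

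For the interior, a direct monotonicity argument in $t$ fails: the derivative
$h'(t)=\tfrac{2(\log\frac{1+t}{1-t})^{1/K-1}}{1-t^2}-\tfrac{2t^{1/K-1}}{K(1-t^{2/K})}$
satisfies $h'(0^+)=+\infty$ but $h'(t_0)<0$ for $K>1$, so $h'$ changes sign on $(0,t_0)$. The plan is to show that $h'$ has \emph{at most one zero} in $(0,t_0)$; combined with the boundary values, this forces $h$ to first increase to a unique interior maximum and then decrease, giving
$\min_{t\in(0,t_0]}h(t)=\min(h(0^+),h(t_0))=0$.
After rearrangement, $h'(t)\geq 0$ is equivalent to
$\tfrac{K(1-t^{2/K})}{1-t^2}\geq\bigl(\tfrac{2\,\mathrm{arth}(t)}{t}\bigr)^{1-1/K}$,
and I intend to show that the left side is a \emph{decreasing} function of $t$ on $(0,1)$ (running from $K$ at $t=0^+$ to $1$ at $t=1^-$), while the right side is an \emph{increasing} function of $t$ (running from $2^{1-1/K}$ to $+\infty$, by the known monotonicity of $t\mapsto 2\,\mathrm{arth}(t)/t$). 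Since $K\geq 2^{1-1/K}$, the two sides satisfy $\mathrm{LHS}>\mathrm{RHS}$ at $t=0^+$, so the opposite monotonicities imply the two sides cross exactly once.

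The main obstacle will be verifying the monotonicity of $K(1-t^{2/K})/(1-t^2)$ in $t\in(0,1)$. I expect this to follow from a direct logarithmic differentiation combined with Lemma~\ref{2C}(1) (which provides that $(1+x)\log(1+x)/x$ is increasing) after a suitable substitution $x=-\log t^{2/K}$; once this monotonicity is established, the three-step structure above — boundary check at $0$, boundary check at $t_0$ via \eqref{1c}, and single crossover of $h'$ — completes the proof.
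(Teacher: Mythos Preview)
Your argument is essentially correct but follows a genuinely different route from the paper. The paper works with the quotient
\[
F(t)=K-\frac{2\,\mathrm{arth}(t^{1/K})}{(2\,\mathrm{arth}(t))^{1/K}}
\]
and shows directly that $F$ is \emph{decreasing} on $(0,t_0]$, reducing the inequality to the single endpoint check $F(t_0)\ge 0$. You instead work with the difference $h(t)$ and show it is \emph{unimodal} via the single-crossing argument for $h'$, which forces the minimum to lie at one of the two endpoints. Both proofs share the verification of $h(t_0)=F(t_0)=K-2\,\mathrm{arth}(t_0^{1/K})\ge 0$; the paper establishes this in $K$ via Lemma~\ref{2ll}(2), while you invoke the companion inequality~\eqref{1c} at $t=t_0$, which is equally valid. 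The paper's monotone-ratio approach is a bit more economical (only one endpoint), whereas your unimodality route is self-contained and avoids the somewhat delicate inequality $(\ast)$ the paper uses in computing $F'$.

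One point to tighten: the monotonicity of $K(1-t^{2/K})/(1-t^2)$, which you correctly identify as the crux, does not obviously come from Lemma~\ref{2C}(1) with the substitution $x=-\log t^{2/K}$ as you suggest. A cleaner route is to set $u=t^{2/K}\in(0,1)$ and note that $(1-u)/(1-u^K)$ is decreasing in $u$; indeed, the numerator of its derivative equals $1-(1-K^{-1})u^{K}-K^{-1}u^{K-1}$ after clearing $K u^{K-1}$... more simply, differentiate $g(u)=(1-u^K)/(1-u)$ to get numerator $(K-1)u^K - Ku^{K-1}+1$, which vanishes at $u=1$ with derivative $K(K-1)u^{K-2}(u-1)\le 0$, so the numerator is $\ge 0$ on $(0,1)$ and $g$ is increasing. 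With that filled in, your proof is complete; the remaining claims (increase of $\mathrm{arth}(t)/t$, the inequality $K\ge 2^{1-1/K}$, and the endpoint asymptotics) are straightforward.
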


\begin{proof}
Write
$$F(t)=K-\displaystyle\frac{\log((1+t^{1/K})/(1-t^{1/K}))}
{\left(\log(1+t)/(1-t)\right)^{1/K}}\;.$$
For the proof of (\ref{1a}) we show that $F(t)$ is decreasing in $t$ and $F(t_0)\geq 0$.
Differentiating $F$ with respect to $t$ we get,

$$F^{'}(t)=\frac{\log\left(\frac{1+t}{1-t}\right)^{(K-1)/K}\left(2t^{1/K}(t^2-1)\log\left(\frac{1+t}{1-t}\right)-2t(t^{2/K}-1)\log\left(\frac{1+t^{1/K}}{1-t^{1/K}}\right)\right)}{Kt(t^2-1)(t^{2/K}-1)}\;.$$

Now we show that
$$t(t^{2/K}-1)\log\left(\frac{1+t^{1/K}}{1-t^{1/K}}\right)\geq t^{1/K}(t^2-1)
\log\left(\frac{1+t}{1-t}\right).\qquad (\ast)$$
For the proof of $(\ast)$, it is enough to prove that
$t(t^{2/K}-1)\geq t^{1/K}(t^2-1).$
We get
\begin{eqnarray*}
t(t^{2/K}-1)-t^{1/K}(t^2-1)&=&(t^{1/K+1}+t)(t^{1/K}-1)-(t^{1/K+1}+t^{1/K})(t-1)\\
                           &=&t^{1/K+1/K+1}+t^{1/K}-t-t^{1/K+1+1}\\
                           &=&t^{1/K}(t^{1/K+1}+1)-t(t^{1/K+1}+1)\\
                           &=&(t^{1/K+1}+1)(t^{1/K}-t)\geq 0,
\end{eqnarray*}
this implies that $F(t)$ is decreasing in $t$. Now we prove that $F(t_0)$
is positive as a function of $K$.
We write
$$f(K)=K-\log\left(\frac{1+t^{1/K}_0}{1-t^{1/K}_0}\right)
=K-2\, \textrm{arth} (t^{1/K}_0)=F(t_0)\;.$$
Differentiating $f$ with respect to $K$ we get
$$f^{'}(K)=1+\frac{2t^{1/K}_0\log(t_0)}{K^2(1-t^{2/K}_0)}
\geq 1-\frac{t^{1/K}_0\log(1/t^2_0)}{K^2(1-t^{2/K}_0)}>0$$
by Lemma \ref{2ll}(2),
hence $f$ is increasing in $K$. This implies the proof.
\end{proof}

\begin{corollary} The following inequality holds for
$K\geq 1 $ and $t\in [0,1)$
\begin{equation} \label{1ccc}
\log \left( \frac{1+t^{1/K}}{1-t^{1/K}} \right) \leq K
 \max \left\{ \left(  \log \left(\frac{1+t}{1-t}\right) \right)^{1/K},
  \, \, \log \left(\frac{1+t}{1-t}\right) \right\} .
\end{equation}
\end{corollary}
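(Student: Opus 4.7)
The plan is to observe that this corollary is essentially a packaging of the two preceding lemmas, inequalities~(\ref{1c}) and~(\ref{1a}), together with the trivial case at $t=0$. Write $t_0=(e-1)/(e+1)$ as in those lemmas; since $[0,t_0]\cup[t_0,1)=[0,1)$, it suffices to check~(\ref{1ccc}) on each of the two subintervals.

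First I would dispose of $t=0$: both sides of~(\ref{1ccc}) vanish, so the inequality is an equality. For $t\in[t_0,1)$, inequality~(\ref{1c}) gives directly
\[
\log\!\left(\frac{1+t^{1/K}}{1-t^{1/K}}\right)\leq K\,\log\!\left(\frac{1+t}{1-t}\right),
\]
and the right-hand side is obviously bounded above by the $\max$ appearing in~(\ref{1ccc}). For $t\in(0,t_0]$, inequality~(\ref{1a}) gives
\[
\log\!\left(\frac{1+t^{1/K}}{1-t^{1/K}}\right)\leq K\left(\log\!\left(\frac{1+t}{1-t}\right)\right)^{1/K},
\]
which is again dominated by the $\max$ in~(\ref{1ccc}). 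Combining these two cases yields the claim.

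There is essentially no obstacle here beyond verifying the case split, since the hard analytic work was already done in the two preceding lemmas. The only tiny point to check is the overlap at $t=t_0$, where both~(\ref{1c}) and~(\ref{1a}) are valid and each individually implies~(\ref{1ccc}); so the two subintervals can safely be taken as closed on the side of $t_0$. No additional monotonicity or differentiation argument is needed, and no new auxiliary estimate must be introduced.
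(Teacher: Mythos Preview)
Your proposal is correct and follows exactly the paper's own approach: the corollary is obtained directly by combining inequalities~(\ref{1c}) and~(\ref{1a}) on the two subintervals $[t_0,1)$ and $(0,t_0]$, together with the trivial case $t=0$. The paper's proof is a one-line remark to this effect, and your write-up simply makes the case split explicit.
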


\begin{proof} The proof follows easily from
inequalities (\ref{1c}) and (\ref{1a}).
\end{proof}

The next function tells us how the hyperbolic distances from the
origin are changed under the radial selfmapping of the the unit disk,
$z \mapsto |z|^{1/K -1} z, K>1,$  which is the restriction of
${\mathcal A}_{1/K,1/K}(z)$ to the unit disk. See also \cite{bv}.

\begin{theorem}\label{1dd} The following inequality holds for $K\geq 1$, $|z|<1$;
\begin{equation}\label{1dD}
\rho(0,{\mathcal A}_{1/K,K}(z))\leq K\max\{\rho(0,|z|),\rho^{1/K}(0,|z|)\}
\end{equation}
where $\rho$ is the hyperbolic metric \cite[p. 19]{vu1}.
\end{theorem}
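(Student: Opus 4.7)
The plan is to reduce the inequality to Corollary \ref{1ccc} (inequality \eqref{1ccc}) by unwinding all the definitions. The reduction itself is essentially a substitution, so the work has already been done in the preceding lemmas.

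First I would use the definition \eqref{myfdef} of $\mathcal{A}_{a,b}$. With $a=1/K$, $b=K$, and $|z|<1$, the first branch applies, so
\[
\mathcal{A}_{1/K,K}(z) = |z|^{1/K-1}z, \qquad |\mathcal{A}_{1/K,K}(z)| = |z|^{1/K}.
\]
This is the point at which the hypothesis $|z|<1$ is used: it picks out the exponent $1/K$ rather than $K$. Writing $t=|z|\in[0,1)$ and recalling that the hyperbolic distance from the origin in the unit disk satisfies $\rho(0,r)=\log\frac{1+r}{1-r}=2\,{\rm arth}(r)$, the left-hand side of \eqref{1dD} becomes
\[
\rho(0,\mathcal{A}_{1/K,K}(z)) = \log\frac{1+t^{1/K}}{1-t^{1/K}},
\]
while the right-hand side becomes
\[
K\max\Bigl\{\log\tfrac{1+t}{1-t},\;\bigl(\log\tfrac{1+t}{1-t}\bigr)^{1/K}\Bigr\}.
\]

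Second, the inequality to be proved in Theorem \ref{1dd} is therefore \emph{verbatim} the statement of Corollary \ref{1ccc} for this value of $t$ and for $K\ge 1$. So the proof reduces to an appeal to that corollary, which was itself obtained from the two lemmas \eqref{1c} (for $t\in[t_0,1)$) and \eqref{1a} (for $t\in(0,t_0]$). The boundary case $t=0$ is trivial.

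There is essentially no obstacle once the identifications above are in place; the only thing worth double-checking is that the two ranges of $t$ in Corollary \ref{1ccc} together cover all $|z|\in[0,1)$, and that the choice of branch in $\mathcal{A}_{1/K,K}$ is the correct one under the assumption $|z|<1$. Both are immediate. Hence the proof is a one-line application of Corollary \ref{1ccc} after substituting $t=|z|$ and expressing both hyperbolic distances via ${\rm arth}$.
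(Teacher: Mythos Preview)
Your proposal is correct and matches the paper's own proof essentially line for line: the paper simply cites inequality \eqref{1ccc} together with the formula $\rho(0,r)=\log\frac{1+r}{1-r}$, which is exactly the reduction you carry out after computing $|\mathcal{A}_{1/K,K}(z)|=|z|^{1/K}$ for $|z|<1$.
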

\begin{proof} Proof follows easily from
inequality (\ref{1ccc}) and the formula $\rho(0,r)= \log((1+r)/(1-r)) \, .$
\end{proof}

\begin{remark}\rm The constant $K$ can not be replaced by $K^{9/10}$ in (\ref{1dD}),
because for $|z|=t_0$, the inequality (\ref{1dD}) is equivalent to
$1-2\, \textrm{arth}(t^{1/K}_0)/K^{9/10}\geq 0$. Write
$f(K)=1-2\, \textrm{arth}(t^{1/K}_0)/K^{9/10}$, and we get
$$f^{'}(K)=\frac{9\,\textrm{arth}(t^{1/K}_0)}{5K^{19/10}}
+\frac{2t^{1/K}\log(t_0)}{K^{29/10}(1-t^{2/K}_0)},$$
we see that $f^{'}(1.005)=-0.004<0$, $f(K)$ is not increasing in $K$.
\end{remark}

\begin{lemma}\label{1D} For $K>1$ the function
$$F(r)=\frac{2{\rm arth}(1/{\rm ch}({\rm arch}(1/r)/K))}{\max\{2{\rm arth}(r),
(2{\rm arth}(r))^{1/K}\}}$$
is monotone increasing in $(0,t_0)$ and decreasing in $(t_0,1)$.
\end{lemma}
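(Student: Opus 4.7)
The plan is to split the domain at $t_0$ using the defining property of $t_0=(e-1)/(e+1)$, namely $2\,\mathrm{arth}(t_0)=\log((1+t_0)/(1-t_0))=\log e=1$. Since $K>1$, the map $a\mapsto a^{1/K}$ on $(0,\infty)$ crosses the identity precisely at $a=1$, lying above it for $a<1$ and below it for $a>1$. Consequently, with $T(r):=2\,\mathrm{arth}(r)$, the maximum in the denominator equals $T(r)^{1/K}$ on $(0,t_0)$ and $T(r)$ on $(t_0,1)$, and $F$ is continuous at $t_0$. Setting $u=\mathrm{arch}(1/r)/K$ and $\phi(r):=2\,\mathrm{arth}(1/\mathrm{ch}(u))$, my task reduces to showing that $\phi/T^{1/K}$ is increasing on $(0,t_0)$ and $\phi/T$ is decreasing on $(t_0,1)$.

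The key computation is the derivative of $\phi$. A short simplification gives $(d/du)\,\mathrm{arth}(1/\mathrm{ch}(u))=-1/\mathrm{sh}(u)$ (the $\mathrm{ch}^2(u)$ factors cancel against $1-1/\mathrm{ch}^2(u)=\mathrm{sh}^2(u)/\mathrm{ch}^2(u)$), while $(d/dr)\,\mathrm{arch}(1/r)=-1/(r\sqrt{1-r^2})$. Combining these and writing $T'(r)=2/(1-r^2)$ yields
$$\frac{\phi'(r)}{\phi(r)}=\frac{1}{Kr\sqrt{1-r^2}\,\mathrm{sh}(u)\,\mathrm{arth}(1/\mathrm{ch}(u))},\qquad \frac{T'(r)}{T(r)}=\frac{1}{(1-r^2)\,\mathrm{arth}(r)}.$$

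On the interval $(t_0,1)$, one has $(\log F)'=(\log\phi)'-(\log T)'$; placing the two logarithmic derivatives above over a common positive denominator shows that $(\log F)'$ has the same sign as
$$\sqrt{1-r^2}\,\mathrm{arth}(r)-Kr\,\mathrm{sh}(u)\,\mathrm{arth}(1/\mathrm{ch}(u)),$$
which is $\le 0$ by Lemma~\ref{2e0}(2). Hence $F$ is decreasing there. On $(0,t_0)$, one has instead $(\log F)'=(\log\phi)'-(1/K)(\log T)'$; the same manipulation reveals the sign of $(\log F)'$ equals that of
$$\sqrt{1-r^2}\,\mathrm{arth}(r)-r\,\mathrm{sh}(u)\,\mathrm{arth}(1/\mathrm{ch}(u)),$$
which is $\ge 0$ by Lemma~\ref{2e0}(1). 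Hence $F$ is increasing there.

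The main obstacle is really just the first hurdle: spotting the identity $(d/du)\,\mathrm{arth}(1/\mathrm{ch}(u))=-1/\mathrm{sh}(u)$ and the companion observation about $t_0$, after which the $K$-factor lands exactly where needed so that the two inequalities encoded by Lemma~\ref{2e0} match the two cases precisely. This alignment is not coincidental — evidently Lemma~\ref{2e0} was formulated in anticipation of this proof — and once it is noticed the remainder is bookkeeping with logarithmic derivatives.
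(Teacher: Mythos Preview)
Your proof is correct and follows essentially the same route as the paper's: split the denominator at $t_0$ (where $2\,\mathrm{arth}(r)=1$), differentiate the two resulting quotients, and invoke Lemma~\ref{2e0}(1) on $(0,t_0)$ and Lemma~\ref{2e0}(2) on $(t_0,1)$. The only cosmetic difference is that you work with logarithmic derivatives while the paper differentiates the quotients directly; the resulting sign conditions are identical.
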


\begin{proof} (1) Let $u={\rm arch}(1/r)/K$ and
$$f(r)=\frac{{\rm arth}(1/{\rm ch}(u))}{{\rm arth}(r)}\,.$$
Differentiating $f$ with respect to $r$ we get
\begin{eqnarray*}
f^{'}(r)&=&-\frac{{\rm arth}(1/{\rm ch}(u))}{(1-r^2)({\rm arth}(r))^2}
+\frac{(1/{\rm ch}(u)){\rm th}(u)}{K\sqrt{1/r-1}\sqrt{1+1/r}\,r^2\,{\rm arth}(r)(1-(1/{\rm ch}(u))^2)}\\
&=&-\frac{Kr\,{\rm arth}(1/{\rm ch}(u)){\rm sh}(u)-\sqrt{1-r^2}\,{\rm arth}(r)}
{Kr(1-r^2)({\rm arth}(r))^2{\rm sh}(u)}\leq 0,
\end{eqnarray*}
by Lemma \ref{2e0}(2), hence $f$ is decreasing in $r\in(0,1)$.\\
(2) Let
$$g(r)=\frac{2^{1-1/K}{\rm arth}(1/{\rm ch}(u))}{({\rm arth}(r))^{1/K}}\,.$$
Differentiating $g$ with respect to $r$ we get
$$g^{'}(r)=\xi\left((1-r^2){\rm arth}(r)-
r\sqrt{1-r^2}\,{\rm arth}(1/{\rm ch}(u)){\rm sh}(u)\right)\geq 0$$
by Lemma \ref{2e0}(1), here
$$\xi=\frac{{2^{1-1/K}({\rm arth}(r))^{-(1+K)/K}}}{Kr(1-r^2)^{3/2}\,{\rm sh}(u)}\,.$$
Hence $g$ is increasing in $r\in(0,1)$.
We see that $f(t_0)=g(t_0)$. Thus $F(r)$ increases in $r\in(0,t_0)$
and decreases in $t\in(t_0,1)$.
\end{proof}

For instance it is well-known that for all $K>1, r \in (0,1)$
\begin{equation}
\log \left(\frac{1+ \varphi_K(r)}{1- \varphi_K(r)}\right) > \ K \log \left(\frac{1+r}{1-r}\right) \,
\end{equation}
\cite[(4.5)]{avvb1}. In the next theorem we study a function $p(r)$ which by
\cite[Thm 10.14]{avvb} is a minorant of $\varphi_K(r)\,.$

\begin{theorem}\label{1ddd} The following inequality holds for $K\geq 1$, $r\in(0,1),\,t_0=(e-1)/(e+1)$,
$$\log\left(\frac{1+p\,(r)}{1-p\,(r)}
\right)\leq c_3(K)\max\left\{\log\left(\frac{1+r}{1-r}\right),
\left(\log\left(\frac{1+r}{1-r}\right)\right)^{1/K}\right\}$$
here $p\,(r)=1/{\rm ch}({\rm arch}(1/r)/K)$ and $c_3(K)=2\,{\rm arth} (p\,(t_0))$.
Moreover, $c_3(K)\to 1$ when $K\to 1\,.$
\end{theorem}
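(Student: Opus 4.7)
The plan is to recognize that this theorem is essentially a direct corollary of Lemma \ref{1D}, once one identifies the auxiliary function on the right-hand side correctly.

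First I would rewrite both sides of the target inequality using the identity $\log\bigl((1+x)/(1-x)\bigr) = 2\,\mathrm{arth}(x)$, valid for $x \in (0,1)$. The left-hand side becomes $2\,\mathrm{arth}(p(r))$, and inside the maximum on the right we get $2\,\mathrm{arth}(r)$ and $(2\,\mathrm{arth}(r))^{1/K}$. Dividing through, the claim is equivalent to
\begin{equation*}
F(r) \;=\; \frac{2\,\mathrm{arth}(p(r))}{\max\{2\,\mathrm{arth}(r),\,(2\,\mathrm{arth}(r))^{1/K}\}} \;\le\; c_3(K),
\end{equation*}
where $F$ is exactly the function in Lemma \ref{1D}. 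That lemma tells us $F$ is increasing on $(0,t_0)$ and decreasing on $(t_0,1)$, so $F$ attains its global maximum on $(0,1)$ at $r=t_0$.

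The key arithmetic observation, and the only thing beyond invoking Lemma \ref{1D}, is that $t_0=(e-1)/(e+1)$ was chosen precisely so that $2\,\mathrm{arth}(t_0) = \log\bigl((1+t_0)/(1-t_0)\bigr) = \log e = 1$. Consequently $(2\,\mathrm{arth}(t_0))^{1/K} = 1$ as well, so the denominator of $F(t_0)$ collapses to $1$, giving
\begin{equation*}
F(t_0) \;=\; 2\,\mathrm{arth}(p(t_0)) \;=\; c_3(K).
\end{equation*}
Combining this with $F(r) \le F(t_0)$ yields the theorem.

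For the limiting statement, I would observe that as $K \to 1$ we have $\mathrm{arch}(1/r)/K \to \mathrm{arch}(1/r)$, so $p(r) = 1/\mathrm{ch}(\mathrm{arch}(1/r)/K) \to r$ pointwise. In particular $p(t_0) \to t_0$, and hence $c_3(K) = 2\,\mathrm{arth}(p(t_0)) \to 2\,\mathrm{arth}(t_0) = 1$ by continuity of $\mathrm{arth}$ on $[0,1)$. The only real obstacle in this argument is already packed into Lemma \ref{1D}; here the work is just to notice the fortuitous normalization $2\,\mathrm{arth}(t_0)=1$ that makes $F(t_0)$ coincide with the advertised constant $c_3(K)$.
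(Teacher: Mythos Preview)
Your proof is correct and follows exactly the same approach as the paper, which simply cites Lemma~\ref{1D} and identifies the maximum value $F(t_0)=c_3(K)$. You actually supply more detail than the paper does: the observation that $2\,\mathrm{arth}(t_0)=1$ makes the denominator collapse, and the explicit continuity argument for $c_3(K)\to 1$, are both left implicit in the original.
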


\begin{proof}
\rm The inequality follows easily from Lemma \ref{1D}, because the maximum value of the function given in Lemma \ref{1D} is
$c_3(K)=1/{\rm ch}({\rm arch}(1/t_0)/K)$.   
\end{proof}

We remark in passing that an inequality similar to (\ref{1ddd}) but with $p(r)$ replaced
with $\varphi_K(r)$ and $c_3(K)$ replaced with a constant $c(K)$ was proved in \cite[Lemma 4.8]{bv}.

\section{Quasiinvariance of the distance ratio metric}

Our goal in this section is to study how the distances in the $j$-metric
are transformed under the function (\ref{myfdef}) following
closely the paper \cite{kmv}. The main result here is Corollary \ref{jandmyf}.


\begin{lemma} The following inequality holds for $K\geq 1$:
\begin{equation}\label{2i}
\log\left(1+\frac{|{\mathcal A}_{1/K,K}(x)-{\mathcal A}_{1/K,K}(y)|}
{\min\{|{\mathcal A}_{1/K,K}(x)|,|{\mathcal A}_{1/K,K}(y)|\}}\right)
\leq 2^{1-1/K}\max\{\log^{1/K}(t),\log(t)\}
\end{equation}
here $t=1+\displaystyle\frac{|x-y|}{\min\{|x|,|y|\}}$, for all $x,y\in\mathbb{B}^n$.
\end{lemma}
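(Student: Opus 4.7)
The plan is to combine Byström's sharp inequality (Theorem~\ref{Bthm}) with the elementary logarithm estimate in Corollary~\ref{2e}(3). Since both sides are symmetric in $x$ and $y$, I would first assume $|x|\le|y|$, so that $\min\{|x|,|y|\}=|x|$ and, by monotonicity of $r\mapsto r^{1/K}$ on $(0,1)$, also $\min\{|{\mathcal A}_{1/K,K}(x)|,|{\mathcal A}_{1/K,K}(y)|\}=|x|^{1/K}$. Setting $s=|x-y|/|x|$ gives $t=1+s$. Because $x,y\in\mathbb{B}^n$, the function ${\mathcal A}_{1/K,K}$ coincides at both points with the map $z\mapsto|z|^{1/K-1}z$, whose values have difference equal to $\alpha_{1/K}(x,y)$.

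Next, applying Byström's theorem with $p=1/K\in(0,1]$ bounds the numerator by $2^{1-1/K}|x-y|^{1/K}$; dividing by $|x|^{1/K}$ reduces the claim to the scalar inequality
\begin{equation*}
\log\bigl(1+2^{1-1/K}s^{1/K}\bigr)\le 2^{1-1/K}\max\bigl\{\log^{1/K}(1+s),\log(1+s)\bigr\}.
\end{equation*}
To handle this, I would first pull the constant $c:=2^{1-1/K}\ge 1$ out of the logarithm via Bernoulli's inequality: $(1+u)^{c}\ge 1+cu$ for $c\ge 1$ and $u\ge 0$ rewrites, after taking logarithms, as $\log(1+cu)\le c\log(1+u)$. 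Taking $u=s^{1/K}$ yields
\begin{equation*}
\log\bigl(1+2^{1-1/K}s^{1/K}\bigr)\le 2^{1-1/K}\log\bigl(1+s^{1/K}\bigr).
\end{equation*}
Then Corollary~\ref{2e}(3), applied with exponent $a=1/K\in[0,1]$ at the argument $s$, gives $\log(1+s^{1/K})\le\max\{\log(1+s),\log^{1/K}(1+s)\}$, and concatenating these two bounds finishes the proof.

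The only real subtlety is ordering the two analytic steps correctly. If one were instead to rewrite $2^{1-1/K}s^{1/K}=(2^{K-1}s)^{1/K}$ and invoke Corollary~\ref{2e}(3) directly, the resulting bound would involve $\log(1+2^{K-1}s)$, whose coefficient $2^{K-1}$ grows much faster than the target $2^{1-1/K}$ as $K\to\infty$, and no further argument can absorb this back. Pulling the constant out of the log \emph{first} via Bernoulli, and only then applying Corollary~\ref{2e}(3), keeps the outer constant at the sharp value $2^{1-1/K}$, which is also what makes the inequality collapse to an identity at $K=1$.
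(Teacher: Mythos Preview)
Your proof is correct and follows essentially the same route as the paper: apply Bystr\"om's inequality (Theorem~\ref{Bthm}) to bound the numerator, pull the constant $2^{1-1/K}$ out via a Bernoulli-type step, and finish with Corollary~\ref{2e}(3). The paper's only cosmetic difference is that it cites Corollary~\ref{2e}(1) in place of your direct appeal to Bernoulli's inequality $(1+u)^{c}\ge 1+cu$; these amount to the same estimate.
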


\begin{proof} By Theorem \ref{Bthm} and Corollary \ref{2e}(1) we get
$$1+\frac{|{\mathcal A}_{1/K,K}(x)-{\mathcal A}_{1/K,K}(y)|}{\min\{|{\mathcal A}_{1/K,K}(x)|,|{\mathcal A}_{1/K,K}(y)|\}}\leq 1+2^{1-1/K}\frac{|x-y|^{1/K}}{\min\{|x|^{1/K},|y|^{1/K}\}}$$
$$\leq \left(1+\left(\frac{|x-y|}
{\min\{|x|,|y|\}}\right)^{1/K}\right)^{2^{1-1/K}}.\qquad (\ast)$$
Taking $\log$ both sides to $(\ast)$ and by Corollary \ref{2e}(3) we get
$$\log\left(1+\frac{|{\mathcal A}_{1/K,K}(x)-{\mathcal A}_{1/K,K}(y)|}{\min\{|{\mathcal A}_{1/K,K}(x)|,|{\mathcal A}_{1/K,K}(y)|\}}\right)
\leq \log\left(\left(1+\left(\frac{|x-y|}
{\min\{|x|,|y|\}}\right)^{1/K}\right)^{2^{1-1/K}}\right)$$
$$\leq2^{1-1/K}\max\left\{\log
\left(1+\frac{|x-y|}{\min\{|x|,|y|\}}\right),
\left(\log\left(1+\frac{|x-y|}{\min\{|x|,|y|\}}\right)\right)^{1/K}\right\}.$$
\end{proof}
We denote by $\partial G$ the boundary of a domain $G$ and define
$$d(z)=\min\{|z-m|:m\in\partial G\}.$$

For a domain  $G \subset \mathbb{R}^n,G\neq\mathbb{R}^n$, the following formula
$$j(x,y)=\log\left(1+\frac{|x-y|}{\min\{d(x),d(y)\}}\right),\;x,y\in G$$
defines $j$ as a metric in $G$ (see \cite[p.28]{vu1}).

\begin{corollary} {\label{jandmyf}}
Let $D=\mathbb{R}^n\setminus\{0\}$, then  we have
$$j_{D}({\mathcal A}_{1/K,K}(x),{\mathcal A}_{1/K,K}(y))\leq 2^{1-1/K}\max\{j_{D}(x,y),j^{1/K}_{D}(x,y)\}$$
for all $K \ge 1, x,y\in\mathbb{B}^n\cap D$.
\end{corollary}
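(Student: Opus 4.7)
The plan is to observe that this corollary is essentially a reformulation of the preceding lemma in the language of the $j$-metric. First I note that since $D=\mathbb{R}^n\setminus\{0\}$ has boundary $\{0\}$, the boundary distance satisfies $d(z)=|z|$ for every $z\in D$, so the $j$-metric takes the explicit form
$$ j_D(x,y)=\log\!\left(1+\frac{|x-y|}{\min\{|x|,|y|\}}\right). $$
In particular, the quantity $t=1+|x-y|/\min\{|x|,|y|\}$ that appears in inequality (\ref{2i}) satisfies $\log t=j_D(x,y)$, so the right-hand side of (\ref{2i}) is already $2^{1-1/K}\max\{j_D(x,y)^{1/K},j_D(x,y)\}$.

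Next I would check that ${\mathcal A}_{1/K,K}$ sends $\mathbb{B}^n\cap D$ into itself, so that the left-hand side is well-defined: for $x\in\mathbb{B}^n\cap D$ one has $|{\mathcal A}_{1/K,K}(x)|=|x|^{1/K}<1$ and ${\mathcal A}_{1/K,K}(x)\neq 0$. Applying the explicit formula for $j_D$ to the image points, I then recognize
$$ j_D({\mathcal A}_{1/K,K}(x),{\mathcal A}_{1/K,K}(y))=\log\!\left(1+\frac{|{\mathcal A}_{1/K,K}(x)-{\mathcal A}_{1/K,K}(y)|}{\min\{|{\mathcal A}_{1/K,K}(x)|,|{\mathcal A}_{1/K,K}(y)|\}}\right) $$
as exactly the left-hand side of (\ref{2i}).

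Combining the two observations, the bound is immediate from (\ref{2i}). All the analytic content has already been extracted in the previous lemma, which in turn relied on the Bystr\"om--Maligranda inequality of Theorem \ref{Bthm} and the Bernoulli-type monotonicity in Corollary \ref{2e}. There is no genuine obstacle here; the only point requiring care is the routine verification that ${\mathcal A}_{1/K,K}$ preserves $\mathbb{B}^n\cap D$, which ensures both sides of the claimed inequality are defined and that the simplification $d(\cdot)=|\cdot|$ is valid on both $(x,y)$ and on their images.
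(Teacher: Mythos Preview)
Your argument is correct and follows exactly the paper's approach: the paper's proof is the single line ``Follows from inequality (\ref{2i}),'' and you have simply spelled out why that implication holds by identifying both sides of (\ref{2i}) with the corresponding $j_D$-expressions via $d(z)=|z|$ on $D=\mathbb{R}^n\setminus\{0\}$. The only minor remark is that Theorem \ref{Bthm} is due to Bystr\"om alone, not Bystr\"om--Maligranda.
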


\begin{proof}
Follows from inequality (\ref{2i}).
\end{proof}

\section{Radial functions}
\begin{definition}\rm Let $f:\overline{\mathbb{R}}^n\to \overline{\mathbb{R}}^n$ be a homeomorphism. We say that $f$ is a \emph{radial function} if there exists a homeomorphism $g:(0,\infty)\to(0,\infty)$
such that $f(x)=g(|x|)x,\, x\in\mathbb{R}^n\setminus\{0\}$.

The following functions are examples of the radial functions:
\begin{enumerate}
\item $h(x)=\displaystyle\frac{x}{|x|^2}\,$, $x \in \mathbb{R}^n\setminus\{0\},$ $ \,h(0)=\infty$, $\,h(\infty)=0\,.$
\item For $a,b>0,$
$${\mathcal A}_{a,b}(x)=\left\{\begin{array}{ll}|x|^{a-1}x\quad if\; |x| \leq 1\\
                   |x|^{b-1}x\quad if\; |x|>1.\end{array}\right.$$
\end{enumerate}
\end{definition}

\begin{remark}\rm Properties of ${\mathcal A}:$
\begin{enumerate}

\item For $|x|<1$ and $a,b,c,d>0$
\begin{eqnarray*}
{\mathcal A}_{a,b}({\mathcal A}_{c,d}(x))&=&{\mathcal A}_{a,b}(|x|^{c-1}x)=||x|^{c-1}x|^{a-1}|x|^{c-1}x\\
                       &=&|x|^{ac-c}|x|^{c-1}x=|x|^{ac-1}x.
\end{eqnarray*}
\item For $|x|>1$
\begin{eqnarray*}
{\mathcal A}_{a,b}({\mathcal A}_{c,d}(x))&=&{\mathcal A}_{a,b}(|x|^{d-1}x)=||x|^{d-1}x|^{b-1}|x|^{d-1}x\\
                       &=&|x|^{bd-d}|x|^{d-1}x=|x|^{bd-1}x.
\end{eqnarray*}
(1) and (2) imply that ${\mathcal A}_{a,b}({\mathcal A}_{c,d}(x))={\mathcal A}_{ac,bd}(x)$.
\item ${\mathcal A}^{-1}_{a,b}(x)={\mathcal A}_{1/a,1/b}(x)$.
\end{enumerate}
\end{remark}



\begin{lemma}\cite[(1.5)]{vu1} An inversion in $S^{n-1}(a,r)$ is defined as,
$$h(x)=a+\frac{r^2(x-a)}{|x-a|^2},\; h(a)=\infty, \; h(\infty)=a.$$
Moreover,
\begin{equation}\label{2a}
|h(x)-h(y)|=\frac{r^2|x-y|}{|x-a||y-a|}.
\end{equation}
\end{lemma}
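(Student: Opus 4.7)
The plan is to prove the identity by a direct algebraic computation, after first reducing to the case $a=0$ by translation. Write $u=x-a$ and $v=y-a$, so that $h(x)-h(y) = r^2\bigl(u/|u|^2 - v/|v|^2\bigr)$. Bringing this to a common denominator gives
\[
h(x)-h(y) \;=\; r^2\,\frac{u|v|^2 - v|u|^2}{|u|^2\,|v|^2}.
\]
Since the denominator is a positive scalar, it remains to evaluate the norm of the vector $w := u|v|^2 - v|u|^2$.

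First I would expand $|w|^2 = w\cdot w$ and collect terms, using the bilinearity of the inner product:
\[
|w|^2 \;=\; |u|^2|v|^4 \;-\; 2\,(u\cdot v)\,|u|^2|v|^2 \;+\; |v|^2|u|^4
\;=\; |u|^2|v|^2\bigl(|u|^2 - 2\,u\cdot v + |v|^2\bigr).
\]
The parenthesized factor is exactly $|u-v|^2$, so $|w| = |u|\,|v|\,|u-v|$. Substituting back yields
\[
|h(x)-h(y)| \;=\; r^2 \,\frac{|u|\,|v|\,|u-v|}{|u|^2\,|v|^2} \;=\; \frac{r^2\,|u-v|}{|u|\,|v|}.
\]
Finally I would undo the translation: $u-v=(x-a)-(y-a)=x-y$, $|u|=|x-a|$, $|v|=|y-a|$, which gives the stated formula (\ref{2a}).

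There is no real obstacle here; the only step that requires any attention is recognising the factorisation $|u|^2|v|^4 - 2(u\cdot v)|u|^2|v|^2 + |v|^2|u|^4 = |u|^2|v|^2|u-v|^2$, which is immediate once the common factor $|u|^2|v|^2$ is extracted. The definitional parts $h(a)=\infty$ and $h(\infty)=a$ are just the standard extension of $h$ to $\overline{\mathbb R}^n$ and need no argument beyond the remark that $|h(x)|\to\infty$ as $x\to a$ and $h(x)\to a$ as $|x|\to\infty$.
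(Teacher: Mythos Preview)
Your computation is correct and is the standard argument for this classical identity. Note, however, that the paper does not actually supply a proof of this lemma: it is stated with a citation to \cite[(1.5)]{vu1} and used as a known fact, so there is no ``paper's own proof'' to compare against. Your direct expansion via $|u|v|^2 - v|u|^2|^2 = |u|^2|v|^2|u-v|^2$ is exactly how one would verify the formula from scratch.
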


One of the goals of this section is to find a partial counterpart of the
distance formula (\ref{2a}) for $\mathcal A$ and to prove Theorem \ref{kal}.

\begin{lemma} Let $h(w)=r^2w/|w|^2,\,r>0,\,w\in\mathbb{R}^n\setminus\{0\}$ and let
$x,y\in\mathbb{R}^n\setminus\{0\}$ with $|x|\leq|y|$. Then with $\lambda=(|x|+|x-y|)/|x|$
and $z=\lambda x$ we have
$$\quad|h(x)-h(z)|\leq |h(x)-h(y)| \leq 3|h(x)-h(z)|.$$
Equality holds in the upper bound for $x=-y$.
\end{lemma}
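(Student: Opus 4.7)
The plan is to reduce both inequalities to elementary geometric facts about the norms $|x|, |y|, |x-y|$ using the distance formula \eqref{2a} for the inversion $h(w)=r^2w/|w|^2$.

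First I would apply \eqref{2a} (with $a=0$) directly:
$$|h(x)-h(y)|=\frac{r^2|x-y|}{|x|\,|y|}, \qquad |h(x)-h(z)|=\frac{r^2|x-z|}{|x|\,|z|}.$$
Next I would use the special structure of $z=\lambda x$. Since $\lambda=(|x|+|x-y|)/|x|>1$, the point $z$ lies on the ray from $0$ through $x$ beyond $x$, so
$$|z|=\lambda|x|=|x|+|x-y|, \qquad |x-z|=(\lambda-1)|x|=|x-y|.$$
Substituting gives the clean expression $|h(x)-h(z)|=r^2|x-y|/\bigl(|x|(|x|+|x-y|)\bigr)$.

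After canceling the common factor $r^2|x-y|/|x|$, the chain of inequalities becomes
$$\frac{1}{|x|+|x-y|}\;\le\;\frac{1}{|y|}\;\le\;\frac{3}{|x|+|x-y|}.$$
The left inequality is equivalent to $|y|\le|x|+|x-y|$, which is just the triangle inequality $|y|=|(y-x)+x|\le|x-y|+|x|$. The right inequality is equivalent to $|x|+|x-y|\le 3|y|$; using the triangle inequality $|x-y|\le|x|+|y|$ together with the hypothesis $|x|\le|y|$ gives $|x|+|x-y|\le 2|x|+|y|\le 3|y|$, as required.

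Finally, for the equality case $x=-y$, we have $|x|=|y|$ and $|x-y|=2|x|$, so $|x|+|x-y|=3|x|=3|y|$, and both estimates used in the right-hand bound are tight simultaneously, producing equality in the upper bound. There is no real obstacle here; the only thing to be careful about is confirming that $\lambda>1$ so that the formulas for $|z|$ and $|x-z|$ carry the correct signs, but this is immediate from $x\ne y$.
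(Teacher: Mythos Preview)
Your proof is correct and follows essentially the same approach as the paper: both compute $|h(x)-h(z)|=r^2|x-y|/\bigl(|x|(|x|+|x-y|)\bigr)$ and then reduce the two inequalities to the triangle-inequality estimates $|y|\le|x|+|x-y|$ and $|x|+|x-y|\le 2|x|+|y|\le 3|y|$. The only cosmetic difference is that the paper obtains $|h(x)-h(z)|$ via the scaling identity $h(\lambda x)=h(x)/\lambda$, whereas you invoke the distance formula~\eqref{2a} for both pairs; the arithmetic and the equality analysis for $x=-y$ are identical.
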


\begin{proof} For the proof of first inequality we observe that
\begin{eqnarray*}
|h(x)-h(z)|&=&|h(x)-\frac{\lambda}{|\lambda|^2} h( x)|=\frac{|\lambda-1|}{\lambda}\frac{r^2}{|x|}\\
   &=&\frac{r^2|x-y|}{|x|(|x|+|x-y|)}\\
   &\leq&\frac{r^2|x-y|}{|x||y|}=|h(x)-h(y)|
\end{eqnarray*}
by triangle inequality.\\
For the second inequality, we have
\begin{eqnarray*}
\frac{|h(x)-h(y)|}{|h(x)-h(z)|}&=&\frac{|x-y|}{|x||y|}\frac{|x|(|x|+|x-y|)}{|x-y|}\\
                                &=&\frac{|x|}{|y|}+\frac{|x-y|}{|y|}\leq 1+\frac{|x|+|y|}{|y|}\leq 3.
\end{eqnarray*}
Note that here equality holds for $x=-y$.
\end{proof}

\begin{lemma}\label{2ii} The following inequality holds for $K\geq 1$:
$$||x|^{K-1}x-|y|^{K-1}y|\leq e^{\pi(K-1/K)}|x|^{K-1/K}\max\{|x-y|^{1/K},|x-y|^K\}$$
for all $x,y\in\mathbb{C}\setminus\overline{\mathbb{B}}^2$.
\end{lemma}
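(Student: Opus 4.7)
The plan is to derive a Lipschitz-type estimate for $f(z)=|z|^{K-1}z$ on the exterior of the unit disk and then convert it into the Hölder-type bound claimed in the lemma by a short case analysis. The constant $e^{\pi(K-1/K)}$ turns out to be rather generous: the argument produces the bound with the smaller constant $K\cdot 2^{K-1}$.

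First I would establish the uniform Lipschitz estimate
\begin{equation*}
|f(x)-f(y)|\le K\,\max(|x|,|y|)^{K-1}\,|x-y|.
\end{equation*}
Writing $x=re^{i\alpha}$, $y=Re^{i\beta}$ one computes $|f(x)-f(y)|^2=(r^K-R^K)^2+4(rR)^K\sin^2((\alpha-\beta)/2)$ and $|x-y|^2=(r-R)^2+4rR\sin^2((\alpha-\beta)/2)$. The mean value theorem gives $|r^K-R^K|\le K\max(r,R)^{K-1}|r-R|$, and $(rR)^{K-1}\le\max(r,R)^{2K-2}$ (together with $K\ge 1$) lets both terms combine into $K^2\max(r,R)^{2K-2}|x-y|^2$.

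Next I would split into three cases, using $|x|,|y|>1$ throughout. If $|x-y|\le 1$, then $|y|\le|x|+1\le 2|x|$, so the Lipschitz bound reduces to $K\cdot 2^{K-1}|x|^{K-1}|x-y|$; the factorisation $|x|^{K-1}|x-y|=|x|^{K-1/K}|x-y|^{1/K}(|x-y|/|x|)^{(K-1)/K}$ combined with $|x-y|\le 1\le|x|$ converts this to $K\cdot 2^{K-1}|x|^{K-1/K}|x-y|^{1/K}$, matching the $\max$ in the lemma. If $|x-y|\ge 1$ and $|y|\le 2|x|$, the same factorisation together with $|x|^{(K-1)/K}|x-y|^{K-1}\ge 1$ produces $K\cdot 2^{K-1}|x|^{K-1/K}|x-y|^K$. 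Finally, if $|y|>2|x|$ then $|x-y|\ge|y|/2$, hence $|y|^{K-1}\le 2^{K-1}|x-y|^{K-1}$, and the Lipschitz bound directly gives $|f(x)-f(y)|\le K\cdot 2^{K-1}|x-y|^K$, which is $\le K\cdot 2^{K-1}|x|^{K-1/K}|x-y|^K$ since $|x|\ge 1$.

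In every case the constant $K\cdot 2^{K-1}$ is dominated by $e^{\pi(K-1/K)}$ for $K\ge 1$: both sides equal $1$ at $K=1$, and the derivative comparison $1/K+\log 2\le \pi(1+1/K^2)$ holds throughout $[1,\infty)$. The main obstacle is modest — just the bookkeeping of exponents, and the awareness that the right-hand side is asymmetric in $x$ and $y$ even though the left-hand side is not. It is tempting to read the specific constant $e^{\pi(K-1/K)}$ as hinting at a quasiconformal Mori-type argument in the spirit of the Hersch--Pfluger distortion estimates used earlier in the paper, but the elementary case analysis above already suffices and in fact yields a strictly smaller constant.
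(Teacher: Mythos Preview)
Your argument is correct and takes a genuinely different route from the paper. The paper treats $f(z)=|z|^{K-1}z$ as a $K$-quasiconformal map of $\overline{\mathbb{C}}$ and invokes the cross-ratio distortion bound $|f(x),f(0),f(y),f(\infty)|\le \eta_{K,2}(|x,0,y,\infty|)$ (from \cite[Theorem~14.18]{avvb}), together with the explicit majorant $\eta_{K,2}(t)\le \lambda(K)\max\{t^{1/K},t^K\}$ and the estimate $\lambda(K)\le e^{\pi(K-1/K)}$ from \cite[Theorem~10.24]{avvb} and \cite[Remark~10.31]{vu1}; the specific constant $e^{\pi(K-1/K)}$ is exactly the known upper bound for the quasiconformal distortion coefficient $\lambda(K)$, which explains its appearance. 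Your approach replaces all of this by the elementary Lipschitz bound $|f(x)-f(y)|\le K\max(|x|,|y|)^{K-1}|x-y|$ (via the mean value theorem on the radial part) followed by a three-case analysis that trades powers of $|x|$ against powers of $|x-y|$ using $|x|,|y|>1$. The payoff is a strictly smaller constant $K\cdot 2^{K-1}$ and no dependence on the machinery of distortion functions; the cost is that the argument is specific to this particular power map, whereas the paper's proof is really a specialisation of a result valid for arbitrary $K$-quasiconformal maps fixing $0$ and $\infty$. Your closing remark correctly identifies the provenance of the constant.
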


\begin{proof} By \cite[Theorem 14.18, (14.4)]{avvb} we get because
 $f:x\mapsto |x|^{K-1}x$ is $K$-quasiconformal \cite[16.2]{v}
$$||x|^{K-1}x,f(0),|y|^{K-1}y,f(\infty)|\leq \eta^{*}_{K,2}(|x,0,y,\infty|)=\eta_{K,2}\left(\frac{|x-y|}{|x|}\right).$$
Finally by \cite[Theorem 10.24]{avvb} and \cite[Remark 10.31]{vu1} we have
\begin{eqnarray*}
||x|^{K-1}x-|y|^{K-1}y|&\leq& |x|^K\eta_{K,2}\left(\frac{|x-y|}{|x|}\right)\\
                        &\leq &\lambda(K)|x|^K\max\{\left(\frac{|x-y|}{x}\right)^{1/K},
                        \left(\frac{|x-y|}{x}\right)^K\}\\
                        &\leq &e^{\pi(K-1/K)}|x|^{K-1/K}\max\{|x-y|^{1/K},|x-y|^K\}.
\end{eqnarray*}
\end{proof}

\begin{lemma}\label{2iii} The following inequality holds for $K\geq 1$ and for all $x,y\in\mathbb{R}^n\setminus\overline{\mathbb{B}}^n$:
$$||x|^{\beta-1}x-|y|^{\beta-1}y|\leq c(K)|x|^{\beta-\alpha}\max\{|x-y|^\alpha,|x-y|^\beta\}$$
here $c(K)=2^{K-1}K^K\exp(4K(K+1)\sqrt{K-1})$ and $\alpha=K^{1/(1-n)}=1/\beta$.
\end{lemma}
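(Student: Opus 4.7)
The plan is to mimic the proof of Lemma \ref{2ii} in arbitrary dimension. The key fact is that the radial stretch $f(x)=|x|^{\beta-1}x$, with $\beta=K^{1/(n-1)}\ge 1$, is a $K$-quasiconformal self-map of $\overline{\mathbb{R}}^n$ fixing $0$ and $\infty$; see \cite[16.2]{v}. Note also that $\alpha=1/\beta\le 1\le\beta$, so $\beta-\alpha\ge 0$.

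First, I would invoke quasi-invariance of the absolute (cross) ratio under $K$-quasiconformal maps, \cite[Theorem 14.18]{avvb}, with the quadruple $(x,0,y,\infty)$. Since $|x,0,y,\infty|=|x-y|/|x|$ and $f$ fixes $0$ and $\infty$, this gives
$$\frac{\bigl||x|^{\beta-1}x-|y|^{\beta-1}y\bigr|}{|x|^{\beta}} \;\le\; \eta^{*}_{K,n}\!\left(\frac{|x-y|}{|x|}\right) \;=\; \eta_{K,n}\!\left(\frac{|x-y|}{|x|}\right).$$

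Second, I would apply the explicit upper bound for the distortion function $\eta_{K,n}$ from \cite[Theorem 10.24]{avvb} together with \cite[Remark 10.31]{vu1}, namely
$$\eta_{K,n}(t)\;\le\; c(K)\,\max\{t^{\alpha},t^{\beta}\}, \qquad c(K)=2^{K-1}K^{K}\exp\!\bigl(4K(K+1)\sqrt{K-1}\bigr),$$
with $\alpha=K^{1/(1-n)}$. Substituting $t=|x-y|/|x|$ and multiplying through by $|x|^{\beta}$ yields
$$\bigl||x|^{\beta-1}x-|y|^{\beta-1}y\bigr| \;\le\; c(K)\,\max\bigl\{|x|^{\beta-\alpha}|x-y|^{\alpha},\; |x-y|^{\beta}\bigr\},$$
where I used $|x|^{\beta}\cdot(|x-y|/|x|)^{\alpha}=|x|^{\beta-\alpha}|x-y|^{\alpha}$ and $|x|^{\beta}\cdot(|x-y|/|x|)^{\beta}=|x-y|^{\beta}$.

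Finally, since $x\notin\overline{\mathbb{B}}^{n}$ we have $|x|>1$ and $\beta-\alpha\ge 0$, hence $|x|^{\beta-\alpha}\ge 1$, which allows us to factor $|x|^{\beta-\alpha}$ out of the maximum to obtain the claimed bound. The only non-routine step is locating the distortion estimate in the precise exponential form that delivers the stated constant $c(K)$; everything else is a dimension-$n$ rerun of the $n=2$ argument in Lemma \ref{2ii}, with $\eta_{K,2}$ replaced by $\eta_{K,n}$ and the exponents $1/K,K$ replaced by $\alpha,\beta$.
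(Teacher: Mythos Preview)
Your argument is essentially identical to the paper's: apply \cite[Theorem~14.18]{avvb} to the $K$-quasiconformal radial stretch to get $||x|^{\beta-1}x-|y|^{\beta-1}y|\le |x|^{\beta}\eta^{*}_{K,n}(|x-y|/|x|)$, bound $\eta_{K,n}$ by $c(K)\max\{t^{\alpha},t^{\beta}\}$, and then use $|x|>1$, $\beta\ge\alpha$ to pull out the factor $|x|^{\beta-\alpha}$. The only discrepancy is the reference for the $\eta_{K,n}$ bound with the stated constant $c(K)$: the paper invokes \cite[Theorem~14.6]{avvb} here, whereas \cite[Theorem~10.24]{avvb} and \cite[Remark~10.31]{vu1} (which you cite, following the $n=2$ lemma) yield the planar estimate $\eta_{K,2}(t)\le\lambda(K)\max\{t^{1/K},t^{K}\}$ with $\lambda(K)\le e^{\pi(K-1/K)}$, not the $n$-dimensional constant.
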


\begin{proof} By \cite[Theorem 14.18]{avvb} we get because
 $f:x\mapsto |x|^{K-1}x$ is $K$-quasiconformal \cite[16.2]{v}
$$||x|^{\beta-1}x,f(0),|y|^{\beta-1}y,f(\infty)|\leq \eta^{*}_{K,n}(|x,0,y,\infty|),$$
and this is equivalent to
$$||x|^{\beta-1}x-|y|^{\beta-1}y|\leq |x|^\beta\eta^{*}_{K,n}\left(\frac{|x-y|}{|x|}\right).$$
By \cite[Theorem 14.6]{avvb} we get
\begin{eqnarray*}
||x|^{\beta-1}x-|y|^{\beta-1}y|&\leq &c(K) |x|^\beta\max\left\{\left(\frac{|x-y|}{x}\right)^\alpha,
                        \left(\frac{|x-y|}{x}\right)^\beta\right\}\\
                        &\leq &c(K)|x|^{\beta-\alpha}\max\{|x-y|^\alpha,|x-y|^\beta\}.
\end{eqnarray*}
\end{proof}

\begin{corollary} The following inequalities hold for $K\geq 1$;
\begin{equation}\label{2j}
\left|\frac{x}{|x|^{1+1/K}}-\frac{y}{|y|^{1+1/K}}\right|\leq 2^{1-1/K}\frac{|x-y|^{1/K}}{(|x||y|)^{1/K}}
\end{equation}
for all $x,y\in \mathbb{R}^n\setminus\mathbb{B}^n$,

\begin{equation}\label{2k}
\left|\frac{x}{|x|^{1+\beta}}-\frac{y}{|y|^{1+\beta}}\right|\leq \frac{c(K)}{{|x|^{\beta-\alpha}}}
\max\left\{\left(\frac{|x-y|}{|x||y|}\right)^\alpha,\left(\frac{|x-y|}{|x||y|}\right)^\beta\right\}
\end{equation}
for all $x,y\in\mathbb{B}^n$,

\begin{equation}\label{2kk}
\left|\frac{x}{|x|^{1+K}}-\frac{y}{|y|^{1+K}}\right|\leq \frac{e^{\pi(K-1/K)}}{|x|^{K-1/K}}
\max\left\{\left(\frac{|x-y|}{|x||y|}\right)^{1/K},
\left(\frac{|x-y|}{|x||y|}\right)^K\right\}
\end{equation}
for all $x,y\in\mathbb{B}^2$.
\end{corollary}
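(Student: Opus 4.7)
The plan is to deduce all three inequalities from the corresponding estimates in Theorem~\ref{Bthm}, Lemma~\ref{2ii}, and Lemma~\ref{2iii} via the standard Möbius-style inversion $h(w)=w/|w|^2$. The key computational observation, which I would verify first and then reuse throughout, is the identity
\begin{equation*}
|h(w)|^{t-1}\,h(w)=\frac{w}{|w|^{\,1+t}}
\qquad(w\neq 0,\; t\in\mathbb{R}),
\end{equation*}
together with the inversion distance formula (\ref{2a}) specialized to center $a=0$ and radius $r=1$, namely
\begin{equation*}
|h(x)-h(y)|=\frac{|x-y|}{|x||y|}.
\end{equation*}
Thus, whenever we set $u=h(x)$, $v=h(y)$, the left-hand sides of (\ref{2j}), (\ref{2k}), (\ref{2kk}) become $\bigl||u|^{t-1}u-|v|^{t-1}v\bigr|$ for suitable $t$, and $|u-v|$ converts to $|x-y|/(|x||y|)$.

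For (\ref{2j}), note that $x,y\in\mathbb{R}^n\setminus\mathbb{B}^n$ implies $u,v\in\overline{\mathbb{B}}^n$. Choosing $t=1/K\in(0,1]$, the identity above rewrites the left-hand side as $\bigl||u|^{1/K-1}u-|v|^{1/K-1}v\bigr|$, which by Theorem~\ref{Bthm} with $p=1/K$ is at most $2^{1-1/K}|u-v|^{1/K}$. Substituting the inversion distance formula yields (\ref{2j}) immediately.

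For (\ref{2kk}) the argument is dual: if $x,y\in\mathbb{B}^2\setminus\{0\}$ then $u,v\in\mathbb{C}\setminus\overline{\mathbb{B}}^2$, so Lemma~\ref{2ii} applies with the choice $t=K$. This gives
\begin{equation*}
\bigl||u|^{K-1}u-|v|^{K-1}v\bigr|\leq e^{\pi(K-1/K)}|u|^{\,K-1/K}\max\{|u-v|^{1/K},|u-v|^K\},
\end{equation*}
and then $|u|^{K-1/K}=|x|^{-(K-1/K)}$ together with the inversion distance formula delivers (\ref{2kk}). Finally, (\ref{2k}) is obtained in exactly the same way from Lemma~\ref{2iii} with the exponent $\beta$ (so $t=\beta$), using $|u|^{\beta-\alpha}=|x|^{-(\beta-\alpha)}$. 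I do not anticipate any real obstacle here: the entire content of the corollary is a bookkeeping translation of the three quoted results under the inversion $h$, so the only step requiring a small check is the exponent identity $|h(w)|^{t-1}h(w)=w/|w|^{1+t}$, which is a one-line computation.
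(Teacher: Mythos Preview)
Your proposal is correct and follows essentially the same route as the paper: compose the radial power map with the inversion $h(w)=w/|w|^2$, use the identity $|h(w)|^{t-1}h(w)=w/|w|^{1+t}$, and then invoke Theorem~\ref{Bthm}, Lemma~\ref{2iii}, and Lemma~\ref{2ii} together with the inversion distance formula (\ref{2a}). The paper phrases this as computing $g=\mathcal{A}\circ h$ directly rather than substituting $u=h(x)$, $v=h(y)$, but the argument is identical.
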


\begin{proof} For the proof of (\ref{2j}) we define
$$g(z)={\mathcal A}_{1/K,K}(h(z))=\frac{z}{|z|^{1+1/K}}, \; h(z)=\frac{z}{|z|^2},\;z\in\mathbb{R}^n\setminus\mathbb{B}^n.$$
By Theorem \ref{Bthm} and (\ref{2a}) we get,
$$|g(x)-g(y)|=\left|\frac{x}{|x|^{1+1/K}}-\frac{y}{|y|^{1+1/K}}\right|\\
\leq 2^{1-1/K}|h(x)-h(y)|^{1/K} \leq 2^{1-1/K}\frac{|x-y|^{1/K}}{(|x||y|)^{1/K}}.$$
Again for the proof of (\ref{2k}) we define
$$g(z)={\mathcal A}_{\alpha,\beta}(h(z))=\frac{z}{|z|^{1+\beta}}, \; h(z)=\frac{z}{|z|^2},\;z\in\mathbb{B}^n.$$
By Lemma \ref{2iii} and  (\ref{2a}) we get,
\begin{eqnarray*}
|g(x)-g(y)|&\leq& c(K)|h(x)|^{\beta-\alpha}\max\{|h(x)-h(y)|^\alpha,|h(x)-h(y)|^\beta\}\\
           &=& \frac{c(K)}{|x|^{\beta-\alpha}} \max\left
           \{\left(\frac{|x-y|}{|x||y|}\right)^\alpha,
           \left(\frac{|x-y|}{|x||y|}\right)^\beta\right\}.
\end{eqnarray*}
Similarly, inequality (\ref{2kk}) follows from Lemma \ref{2ii} and  (\ref{2a}).
\end{proof}

\begin{lemma}\label{le} For $0< a\le 1\le p< \infty$ and $0\le s\le 2\pi$ we have
$$\frac{(1 + p^{2 a} - 2 p^a \cos s)^{1/2}}{(-1 + (1 + X)^a)}
\le \frac{1 + p^a}{(-1 + (2 + p)^a)}\,, \quad X = \sqrt{1 +
p^2 - 2 p \cos s}\, .$$
\end{lemma}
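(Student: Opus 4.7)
My plan is to reduce the inequality to a one-variable monotonicity statement that is then settled by a single application of Bernoulli's inequality.

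First, I would observe that the right-hand side is exactly the value of the left-hand side at $s=\pi$: there $\cos s=-1$, so $X=1+p$, $(1+X)^a-1=(2+p)^a-1$, and $(1+p^{2a}-2p^a\cos s)^{1/2}=1+p^a$. By the symmetry $s\mapsto 2\pi-s$ I may restrict to $s\in[0,\pi]$, and it then suffices to prove that the left-hand side attains its supremum on this interval at $s=\pi$.

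Next I would eliminate $\cos s$. Writing $u^2:=1+p^{2a}-2p^a\cos s$ and using $\cos s=(1+p^2-X^2)/(2p)$, a direct substitution yields the clean identity
$$u^2=AX^2-B,\qquad A:=p^{a-1},\quad B:=(1-p^{a-1})(p^{a+1}-1).$$
The hypotheses $0<a\le 1\le p$ give $A>0$ and $B\ge 0$. As $s$ varies over $[0,\pi]$, $X$ varies monotonically over $[p-1,p+1]$, so it suffices to show that
$$\phi(X):=\frac{\sqrt{AX^2-B}}{(1+X)^a-1}$$
is non-decreasing on $[p-1,p+1]$; then $\phi(X)\le\phi(p+1)=(1+p^a)/((2+p)^a-1)$, which is the claim.

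For the monotonicity I would compute $(\phi^2)'(X)$ and find that its sign agrees with the sign of
$$\Delta(X):=AX\bigl[(1+X)^{a-1}\bigl(1+(1-a)X\bigr)-1\bigr]+aB\,(1+X)^{a-1}.$$
The second summand is non-negative because $a>0$ and $B\ge 0$. The first is non-negative by the Bernoulli inequality $(1+X)^{1-a}\le 1+(1-a)X$, valid for $1-a\in[0,1]$ and $X\ge 0$, which is equivalent to $(1+X)^{a-1}(1+(1-a)X)\ge 1$. Hence $\Delta(X)\ge 0$ throughout, $\phi$ is non-decreasing on $[p-1,p+1]$, and the lemma follows.

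The main obstacle I anticipate is the sign bookkeeping in the rearrangement of $\Delta(X)$: the constant $B$ enters $u^2$ with a minus sign but appears in $\Delta$ with a plus sign (giving a positive contribution), so careful algebra is needed to see that the two competing terms in $\Delta$ are both non-negative. Once this is arranged, Bernoulli's inequality finishes the argument in one line.
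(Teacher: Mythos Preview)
Your argument is correct and follows essentially the same path as the paper's: both prove the squared ratio is monotone by differentiating and finishing with Bernoulli's inequality $(1+X)^{1-a}\le 1+(1-a)X$, and your factorization $B=(1-p^{a-1})(p^{a+1}-1)\ge 0$ is precisely the paper's auxiliary inequality $p^{1-a}+p^{a+1}\le 1+p^2$. The only difference is that the paper differentiates directly in $s$, whereas you first pass to the variable $X$; this tidies the algebra but the two key ingredients are identical.
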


\begin{proof}
Let $$f_{p,a}(s)=\frac{(1 + p^{2 a} - 2 p^a \cos s)}{ (-1 +
(1 + X)^a)^2}.$$  Then
$$f_{p,a}'(s)=2\frac{(-a  (p^{1 - a} + p^{a + 1} - 2 p \cos s)/X +
    (1+X-(1+X)^{1-a})) \sin s}{
 p^{-a} (1 + X)^{1 - a}  (-1 + (1 + X)^a)^3}
$$

As $$p^{1 - a} + p^{a + 1}\le 1+p^2$$ because
$$p^{1+a}(1-p^{1-a})\le 1-p^{1-a}$$ it follows that
$$f'_{p,a}(s) / \sin s\ge 2\frac{(-a  X +
    (1 +  X -  (1 + X)^{1 - a}))}{
 p^{-a} (1 + X)^{1 - a} (-1 + (1 + X)^a)^3}.
$$
As $$(1+X)^{1-a}< 1+(1-a)X,$$ it follows that
$$f'_{p,a}(s)=0\text{ if and only $s=0$ or $s=\pi$.}$$

For $s=0$,  the function $f_{p,a}$ achieves its minimum $$
f_{p,a}(0)=\left(\frac{-1 + p^
   a}{-1 + {p}^a}\right)^2=1$$ and for $s=\pi$ its maximum $$
f_{p,a}(\pi)=\left(\frac{1 +p^
   a}{(-1 + (2 + p)^a)}\right)^2.$$
\end{proof}

\begin{lemma}\label{ve}
For $p\ge 1$, and $0<d\le 1$ there holds
\begin{equation}\label{bif} \frac{1+p^d}{(2+p)^d-1}\le
\frac{2}{3^d-1}.
\end{equation}
\end{lemma}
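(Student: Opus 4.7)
\medskip

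\noindent\textbf{Proof plan.} Observe first that at $p=1$ the left-hand side of \eqref{bif} equals $(1+1)/(3^d-1)=2/(3^d-1)$, which is exactly the right-hand side. Thus the inequality is an equality at $p=1$, and it suffices to prove that the function
$$
f(p) \;=\; \frac{1+p^d}{(2+p)^d - 1}
$$
is monotone decreasing on $[1,\infty)$. I would therefore reduce \eqref{bif} to the statement $f(p)\le f(1)$ for $p\ge 1$ and attack it via the sign of $f'(p)$.

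A direct differentiation shows that the sign of $f'(p)$ coincides with the sign of
$$
N(p)\;:=\;p^{d-1}\bigl((2+p)^d - 1\bigr) \;-\; (1+p^d)(2+p)^{d-1}.
$$
Expanding $p^{d-1}(2+p)^d = p^{d-1}(2+p)(2+p)^{d-1}$ and $p^d(2+p)^{d-1}=p\cdot p^{d-1}(2+p)^{d-1}$, the cross terms combine to give
$$
N(p)\;=\;2\,p^{d-1}(2+p)^{d-1} \;-\; p^{d-1} \;-\; (2+p)^{d-1}.
$$
So $f'(p)\le 0$ is equivalent to the elementary inequality
$$
p^{d-1} + (2+p)^{d-1} \;\ge\; 2\,p^{d-1}(2+p)^{d-1}. \qquad (\ast)
$$

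To establish $(\ast)$, set $u=p^{d-1}$ and $v=(2+p)^{d-1}$. Since $0<d\le 1$ and $p\ge 1$, we have $d-1\le 0$ and both $p\ge 1$ and $2+p\ge 3$, so $u,v\in(0,1]$. Dividing $(\ast)$ by $uv>0$, it becomes $\tfrac{1}{u}+\tfrac{1}{v}\ge 2$, which is immediate because $1/u\ge 1$ and $1/v\ge 1$. Therefore $f'(p)\le 0$ on $[1,\infty)$, hence $f(p)\le f(1)=2/(3^d-1)$, which is \eqref{bif}.

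The whole argument is elementary once the correct reduction is made; the only mildly delicate point is the algebraic simplification of $N(p)$, where the two leading terms $p^{d-1}(2+p)^d$ and $p^d(2+p)^{d-1}$ must be rewritten so that the common factor $p^{d-1}(2+p)^{d-1}$ appears. I do not anticipate any real obstacle beyond that bookkeeping.
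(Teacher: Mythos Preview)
Your proof is correct. Both you and the paper argue via monotonicity in $p$ together with the observation that equality holds at $p=1$, so the overall strategy is the same. The execution differs slightly: the paper clears denominators and shows that $h(p)=(3^d-1)(1+p^d)-2((2+p)^d-1)$ is decreasing, reducing $h'(p)\le 0$ to $(2/p+1)^{1-d}\le 2/(3^d-1)$ and then to $3^d\le 3$; you instead differentiate the ratio $f(p)$ directly and, after the nice cancellation $p^{d-1}(2+p)^d-p^d(2+p)^{d-1}=2p^{d-1}(2+p)^{d-1}$, reduce $f'(p)\le 0$ to $1/u+1/v\ge 2$ with $u=p^{d-1},\,v=(2+p)^{d-1}\in(0,1]$. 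Your endgame is arguably cleaner, since it avoids the intermediate bound $(2/p+1)^{1-d}\le 3^{1-d}$ that the paper uses.
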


\begin{proof}
Let $$h(p)=(3^d-1)(1+p^d)-2((2+p)^d-1).$$ We need to show
that $h(p)\le 0$. First of all
$$h'(p)=d\left((3^d-1)p^{d-1}-2(2+p)^{d-1}\right).$$ Then
$$h'(p)\le 0 \Leftrightarrow \left(\frac{2}{p}+1\right)^{1-d}\le \frac{2}{3^d-1}.$$
Since $$\left(\frac{2}{p}+1\right)^{1-d}\le 3^{1-d}$$ we need to
show that $$\left(3\right)^{1-d}\le \frac{2}{3^d-1},$$ but this is
equivalent to $$3^d\le 3$$ which is obviously true. Thus
$h'(p)\le 0$, and consequently $h(p)\le h(1)=0$ and this
inequality coincides with \eqref{bif}.
\end{proof}

\begin{subsec}{\bf Proof of  Theorem \ref{kal}.}  \rm
{\it The case $1\le |x|\le |y|$}. Let us show that $Q(x,y)\le
1$.
Without loss of generality, we can assume that $x=r$ and $z$ are
positive real numbers, and $y=R e^{it}$. Then $z=r + |r-Re^{it}|$.
Let $$p = \frac{R}{r}.$$ Then $p\ge 1$. Next we have:
\[\begin{split}\frac{|{\mathcal A}(x)-{\mathcal A}(y)|}{|{\mathcal A}(x)-{\mathcal A}(z)|}&= \frac{|1-p^b
e^{it}|}{(1+|1-p e^{it}|)^b-1}\\&\le \frac{|1-p^b
e^{it}|}{(1+|1-p|)^{b-1}(1+|1-p e^{it}|)-1}\\&=\frac{|1-p^b
e^{it}|}{p^{b-1}(1+|1-p e^{it}|)-1}\\&=\frac{|1-p^b
e^{it}|}{p^{b-1}-1+|p^{b-1}-p^b
e^{it}|}\\&=\frac{|1-p^{b-1}+p^{b-1}-p^b
e^{it}|}{p^{b-1}-1+|p^{b-1}-p^b e^{it}|}\le 1.\end{split}\]

If $|x|\leq |y|\leq 1$ and $|z|\le 1$, then by Lemmas \ref{le} and \ref{ve} we get
\begin{eqnarray*}
\frac{|{\mathcal A}(x)-{\mathcal A}(y)|}{|{\mathcal A}(x)-{\mathcal A}(z)|}&=&\frac{|r^a-R^a
e^{it}|}{(r+|r-Re^{it}|)^a-r^a}\\
&\leq &\frac{1+p^a}{(2+p)^a-1}
 \leq \frac{2}{3^a-1}.
 \end{eqnarray*}

If $|x|\le |y|\le 1$  and $|z|\ge 1$, then it
follows  from Lemmas \ref{le} and \ref{ve} and  $|z|^b\ge |z|^a$  that
\begin{eqnarray*}
\frac{|{\mathcal A}(x)-{\mathcal A}(y)|}{|{\mathcal A}(x)-{\mathcal A}(z)|}&\leq &
\frac{|r^a-R^a e^{it}|}{(r+|r-Re^{it}|)^a-r^a}\\
&\leq &\frac{1+p^a}{(2+p)^a-1}
\leq \frac{2}{3^a-1}.
 \end{eqnarray*}

{\it The case $|x|\le 1\le |y|$ and $r^{a-1}> R^{b-1}$.} Then there
holds
$$Q(x,y) \le \frac{2}{3^a-1}.$$
First of all
\[\begin{split}\frac{ |{\mathcal A}(x)-{\mathcal A}(y) |}{ |{\mathcal A}(x)-{\mathcal A}(z) |}&=\frac{|r^a-R^b e^{it}|}{(r+|r-Re^{it}|)^b-r^a}
\\&=\frac{|\alpha-e^{it}|}{(\beta+|\beta-e^{it}|)^b-\alpha}
\end{split}
\]
where $\alpha=\frac{r^a}{R^b}$ and $\beta = \frac r R$. Take the
continuous function $k(q) = \beta^q$, $a\le q\le 1$. Since
$$\beta=k(1)=\frac{r}{R}\le \alpha=\frac{r^a}{R^b}\le  k(a)= \frac{r^a}{R^a} $$ it follows
that there exists a constant $c$ with $ a\le c \le 1$ such that $k(c)=\beta^c =
\alpha$. Then
\begin{eqnarray*}
\frac{|\alpha-Re^{it}|}{(\beta+|\beta-e^{it}|)^b-\alpha}&=&
\frac{|\beta^c-e^{it}|}{(\beta+|\beta-e^{it}|)^b-\beta^c}\\
&\leq &\frac{|\beta^c-e^{it}|}{(\beta+|\beta-e^{it}|)^c-\beta^c}
\\
&\leq & \frac{1+\beta^c}{(2+\beta)^c-\beta^c}\\
 &\leq & \frac{2}{3^c-1}
 \leq \frac{2}{3^a-1},
 \end{eqnarray*}
the second inequality follows from Lemma \ref{le} and the third inequality follows from Lemma
\ref{ve} by taking $p=1/\beta$ and $c=d$.

Finally, let us show that $C(a,b)\ge 2/(3^a-1)\,.$ Suppose that $x \in
{\mathbb R}^n \setminus \{ 0 \} $ is such that $3 |x|<1\,,$ i.e.
$0 < |x|<1/3\,$ and $y=-x\,.$ Then $z= x(|x|+|x-y|)/|x| = 3 x$ and
$$
Q(x, -x)= \frac{2 |x|^a}{ (3|x|)^a - |x|^a}=  \frac{2 }{3^a-1} \,,
$$
and hence $C(a,b)\ge 2/(3^a-1)\,.$
$\square$
\end{subsec}

\section{Conclusions}


\begin{subsec}{\bf Proof of  Theorem \ref{mymaj}. }\rm
{} For $|x|,|y|<1$ we have
$$\alpha_p(x,y)=\left||x|^{p-1}x-|y|^{p-1}y\right|=
\left|{\mathcal A}_{p, 1/p}(x) - {\mathcal A}_{p, 1/p}(y)  \right|.$$
Consider the case $|x|<1<|y|\,$. It is obvious that
$$\cos\theta\leq 1<\frac{|x|^{-p}(|y|^{1/p}+|y|^{p})}{2},$$
this is equivalent to
$$\cos\theta\leq 1< \frac{(|y|^{1/p}-|y|^{p})(|y|^{1/p}+|y|^{p})}{2|x|^p(|y|^{1/p}-|y|^{p})}$$
$\Longleftrightarrow$
$$\qquad  2|x|^p|y|^{1/p}\cos\theta-2|x|^p|y|^{p}\cos\theta < |y|^{2/p}-|y|^{2p}$$
$\Longleftrightarrow$
$$|y|^{2p}-2|x|^{p-1}|y|^{p-1}|x||y|\cos\theta <
|y|^{2/p}-2|x|^{p-1}|y|^{1/p-1}|x||y|\cos\theta$$
$\Longleftrightarrow$
$$||x|^{p-1}x|^2+||y|^{p-1}y|^2-2|x|^{p-1}|y|^{p-1}x\,y
< ||x|^{p-1}x|^2+||y|^{1/p-1}y|^2-2|x|^{p-1}|y|^{1/p-1}x\,y$$
$\Longleftrightarrow$
$$\left||x|^{p-1}x-|y|^{p-1}y\right|^2<\left||x|^{p-1}x-|y|^{1/p-1}y\right|^2
=\left|{\mathcal A}_{p, 1/p}(x) - {\mathcal A}_{p, 1/p}(y)\right|^2.$$


Consider now the case $1<|x|<|y|$. Starting with the observation that the function $t
\mapsto t^{1/p}-t^p $ is increasing for $t>1$ when $p \in (0,1)\,,$
we see that
$$\frac{|x|^{1/p}}{|x|^p}\left(\left(\frac{|y|}{|x|}\right)^{1/p}-1\right)
>\left(\left(\frac{|y|}{|x|}\right)^{p}-1\right)\Leftrightarrow
(|y|^{1/p}-|x|^{1/p})^2>(|y|^{p}-|x|^{p})^2$$ $\Longleftrightarrow$
$$|x|^{2/p}-|y|^{2p}+|y|^{2/p}-|x|^{2p}>2|x|^{1/p}|y|^{1/p}-2|x|^{p}|y|^{p}.$$
Now it is clear that
$$\cos\theta\leq 1<\frac{|x|^{2/p}-|y|^{2p}+|y|^{2/p}-|x|^{2p}}
{2|x|^{1/p}|y|^{1/p}-2|x|^{p}|y|^{p}}$$ $\Longleftrightarrow$
$$|x|^{2p}+|y|^{2p}-2|x|^{p}|y|^{p}\cos\theta
<|x|^{2/p}+|y|^{2/p}-2|x|^{1/p}|y|^{1/p}\cos\theta$$

$\Longleftrightarrow$
$$||x|^{p-1}x|^2+||y|^{p-1}y|^2-2|x|^{p-1}|y|^{p-1}x\,y
< ||x|^{1/p-1}x|^2+||y|^{1/p-1}y|^2-2|x|^{1/p-1}|y|^{1/p-1}x\,y$$

$\Longleftrightarrow$
$$||x|^{p-1}x|^2+||y|^{p-1}y|^2-2|x|^{p-1}|y|^{p-1}x\,y
<||x|^{p-1}x|^2+||y|^{1/p-1}y|^2-2|x|^{p-1}|y|^{1/p-1}x\,y$$

$\Longleftrightarrow$
$$\left||x|^{p-1}x-|y|^{p-1}y\right|^2<\left||x|^{1/p-1}x-|y|^{1/p-1}y\right|^2
=\left|{\mathcal A}_{p, 1/p}(x) - {\mathcal A}_{p, 1/p}(y)\right|^2.  \quad \square$$
\end{subsec}



\begin{subsec} {\rm
{\bf Comparison of the bounds.}
In what follows, we use the symbols $M,D, B,K$ for the bounds given by Theorems
\ref{Mthm}, \ref{Dthm}, \ref{Bthm}, \ref{kal}, respectively. In the case of the
complex plane, we will show by numerical examples that each of these four bounds
can occur as minimal. To this end, for each of the symbols $M,D, B,K$,
we give a table of four $x,y$ pairs and the corresponding upper bound values associated
with the four symbols  $M,D, B,K$, such that the bound associated with the symbol
in question is the least one. For the computation of the $K$ bound it should be observed
that in Theorem \ref{mymaj} we have the constraint $|x|\le|y|\,.$ If this is not the
situation to begin with, we have swapped the points for computation. In Tables 1-4
the parameter $p=0.5\,.$

\begin{table}[ht]
\caption{Sample points with $K<\min \{B,D,M\} \,.$ }\label{table1}
\renewcommand\arraystretch{1}
\noindent
\begin{displaymath}
\begin{array}{|c|c|c|c|c|c|c|}
\hline
k&x_k&y_k&B&D&M&K\\
\hline

1&-2.00-2.65i & 2.65-2.65i & 3.0496 & 143.4290 & 3.6030 & 2.6591\\

2&2.25-0.75i &2.65+1.30i & 2.0438 & 38.9860&1.8236 & 1.5158\\

3&1.35+0.50i &1.95-0.65i & 1.6107 & 14.8000&1.3571&1.2768\\

4&1.10+2.30i &-2.40+2.10i & 2.6479 & 82.4142 &2.9447&2.3646\\

\hline
\end{array}
\end{displaymath}
\end{table}

%
%
%

\begin{table}[ht]
\caption{Sample points with $D<\min \{B,K,M\} \,.$ }\label{table2}
\renewcommand\arraystretch{1}
\noindent
\begin{displaymath}
\begin{array}{|c|c|c|c|c|c|c|}
\hline
k&x_k&y_k&B&K&M&D\\
\hline

1&0.80-0.50i & -1.80+1.45i & 3.6968 & 45.3884 & 3.2066 & 2.5495\\

2&2.25-0.75i &0.00-0.05i & 15.5147     & 32.3855&2.7931 & 2.6174\\

3&2.55+1.50i&-1.10+1.70i & 2.8148   & 76.9511&3.1879&2.7039\\

4&-2.70+3.00i&1.50+0.60i& 4.2727  &106.6320 &3.6118&3.1104\\

\hline
\end{array}
\end{displaymath}
\end{table}

%
%
%
%

\begin{table}[ht]
\caption{Sample points with $B<\min \{D,K,M\} \,.$ }\label{table3}
\renewcommand\arraystretch{1}
\noindent
\begin{displaymath}
\begin{array}{|c|c|c|c|c|c|c|}
\hline
k&x_k&y_k&D&K&M&B\\
\hline
1&-2.45-2.205i & -1.2+0.55i & 2.92 & 43.55 & 2.42 & 2.40\\
2&-1.65+1.45i &2.15+2.75i & 3.01& 92.27&3.22 & 2.83\\
3&-0.2-3i&-0.4+0.2i & 5.21   & 34.64&2.77&2.53\\

4&0.9-2.9i&-1.4+1.35i& 3.74 &115.15 &4.16&3.11\\

\hline
\end{array}
\end{displaymath}
\end{table}

%
%

\begin{table}[ht]
\caption{Sample points with $M<\min \{B,D,K\} \,.$ }\label{table4}
\renewcommand\arraystretch{1}
\noindent
\begin{displaymath}
\begin{array}{|c|c|c|c|c|c|c|}
\hline
k&x_k&y_k&B&D&K&M\\
\hline

1&0.30+0.50i & -0.15+2.95i & 2.23 & 3.69 & 23.73 & 2.17\\

2&0.95+1.85i & 0.55+1.55i & 1.00 & 0.53 & 5.18 & 0.52\\

3&1.60-0.25i & 1.10-0.35i & 1.01 & 0.64 & 3.93 & 0.60\\

4&-0.60+0.30 & -3.00+1.95i & 2.41 & 4.02 & 32.84 & 2.31\\

\hline
\end{array}
\end{displaymath}
\end{table}

%

} 
\end{subsec}

\begin{table}[ht]
\caption{Sample points with $M<\min\{(\ref{2j}),D\} \,.$ }\label{table5}
\renewcommand\arraystretch{1}
\noindent
\begin{displaymath}
\begin{array}{|c|c|c|c|c|c|}
\hline
k&x_k&y_k&(\ref{2j})&D&M\\
\hline

1&2.25+2.45i & -0.01+2.95i & 0.27 & 0.27 & 0.24\\

2&-2.60+0.40i & -0.70-0.60i & 1.23 & 3.30 & 1.19\\

3&0.75-0.75i & -2.90-2.50i & 1.32 & 4.53 & 1.23\\

4&2.90+1.90i & 1.20+0.85i & 0.75 & 1.67 & 0.71\\

\hline
\end{array}
\end{displaymath}
\end{table}

%
%

\begin{table}[ht]
\caption{Sample points with $(\ref{2j})<\min\{D,M\} \,.$ }\label{table6}
\renewcommand\arraystretch{1}
\noindent
\begin{displaymath}
\begin{array}{|c|c|c|c|c|c|}
\hline
k&x_k&y_k&D&M&(\ref{2j})\\
\hline

1&-2.60-1.05i & -1.35-1.40i & 0.70 & 0.65 & 0.56\\

2&-0.45-1.05i & 2.35+1.80i & 3.95 & 1.83 & 1.46\\

3&-1.15+2.30i & 2.70+0.65i & 0.99 & 2.12 & 0.96\\

4&-0.10+1.25i & 2.90+2.45i & 0.71 & 0.94 & 0.60\\

\hline
\end{array}
\end{displaymath}
\end{table}

%

\begin{table}[ht]
\caption{Sample points with $D<\min\{(\ref{2j}),M\} \,.$ }\label{table7}
\renewcommand\arraystretch{1}
\noindent
\begin{displaymath}
\begin{array}{|c|c|c|c|c|c|}
\hline
k&x_k&y_k&(\ref{2j})&M&D\\
\hline

1&1.35+2.95i & -1.35+2.90i & 0.59 & 1.07 & 0.43\\

2&-0.80+2.75i & -1.85+2.40i & 0.38 & 0.49 &  0.25\\

3&2.65+2.20i & -2.45+2.40i & 0.49 & 0.64 & 0.49\\

4&1.20-0.70i & 1.30+0.70i & 1.05 & 1.96 & 0.91\\

\hline
\end{array}
\end{displaymath}
\end{table}

In conclusion, Tables 1-4 demonstrate that each of the above four bounds is sometimes
smaller than the minimum of the other three bounds. Some further results, in addition
to Theorems \ref{Mthm}, \ref{Dthm}, \ref{Bthm}, \ref{kal} can be found
in the papers \cite{m} and \cite{d}. The tables were compiled with the help of the
Mathematica software package.

In Tables 5-7 we compare $(\ref{2j}), M$ and $D$, for $x,y\in \mathbb{R}^n\setminus
\mathbb{B}^n,\, p=-0.6.$


\bigskip

\small

\bigskip

\noindent
{\sc
B. A. Bhayo and M. Vuorinen}\\
Department of Mathematics\\
University of Turku\\
20014 Turku\\
Finland\\
barbha@utu.fi\\

\noindent
{\sc V. Bo\v zin}\\
Faculty of Mathematics\\
University of Belgrade\\
Studentski trg 16, Belgrade\\
Serbia\\
bozinv@turing.mi.sanu.ac.rs\\

\noindent
{\sc
D. Kalaj}\\
University of Montenegro\\
Faculty of Mathematics\\
Dzordza Vašingtona b.b.\\
Podgorica\\
Montenegro\\
davidk@ac.me\\




\end{document}